\documentclass[12pt,a4paper]{amsproc}
\usepackage[english]{babel}
\usepackage{latexsym}
\usepackage{amsmath}
\usepackage{amssymb}
\usepackage[all]{xy}
\usepackage{bbding}
\usepackage{adforn}
\usepackage{amscd}
\usepackage[cp850]{inputenc}
\usepackage[mathscr]{eucal}
\tolerance=2000

\setlength{\oddsidemargin}{-0.7cm}
\setlength{\evensidemargin}{-0.7cm} \setlength{\textwidth}{17.3cm}

\setlength{\textheight}{210truemm}
\newcommand{\To}{\longrightarrow}

\theoremstyle{plain}
\newtheorem{quest}{Question}[section]

\newtheorem{defin}[quest]{Definition}
\newtheorem{theorem}[quest]{Theorem}

\newtheorem{prop}[quest]{Proposition}
\newtheorem{corollary}[quest]{Corollary}
\newtheorem{lemma}[quest]{Lemma}

\theoremstyle{remark}

\newcommand{\SSS}{\mathbb{S}}
\newcommand{\R}{\mathbb{R}}
\newcommand{\N}{\mathbb{N}}

\newcommand{\C}{\mathbb{C}}

\newcommand{\sn}{\sum_{k=1}^n}
\newcommand{\mhor}{\Omega^{\prime}}
\newcommand{\adef}{\begin{defin}}
\newcommand{\zdef}{\end{defin}}

\def\Ext{\operatorname{Ext}}

\def\supp{\operatorname{supp}}

\def\PO{\operatorname{PO}}
\newcommand{\Dom}{\mathrm{Dom}}

\def\Ext{\operatorname{Ext}}

\bibliographystyle{plain}


\newcommand{\aproof}{\begin{proof}}
\newcommand{\zproof}{\end{proof}}

\newcommand{\lop}{\curvearrowright}
\newcommand{\T}{\mathbb{T}}
\newcommand{\KP}{{\sf K}\hspace{-1pt}{\sf P}}
\newcommand{\F}{\mathbb{F}}

\title[Actions on twisted sums]{Group actions on twisted sums of Banach spaces}

\author{Jes\'{u}s M. F. Castillo}
\address{Instituto de Matem\'aticas Imuex\\ Universidad de Extremadura\\
Avenida de Elvas\\ 06071-Badajoz\\ Spain} \email{castillo@unex.es}

\author{Valentin Ferenczi}

\address{Departamento de Matem\'atica, Instituto de Matem\'atica e
Estat\'\i stica, Universidade de S\~ao Paulo, rua do Mat\~ao 1010,
05508-090 S\~ao Paulo SP, Brazil  \\ and \newline
Equipe d'Analyse Fonctionnelle \\
Institut de Math\'ematiques de Jussieu \\
Sorbonne Universit\'e - UPMC \\
Case 247, 4 place Jussieu \\
75252 Paris Cedex 05 \\
France.}
\email{ferenczi@ime.usp.br}

\keywords{Semigroup actions; twisted sums of Banach spaces; exact sequences; amenable groups;
complex interpolation}

\subjclass[2010]{Primary 46M18, Secundary 46B70, 22A25}

\thanks{The research of the first author was supported in part by  MINCIN Project PID2019-103961GB-C21, Spain, and
Project IB20038 de la Junta de Extremadura. The research of the second author was supported by FAPESP, grants 2013/11390-4,
2015/17216-1, 2016/25574-8 and by CNPq, grants 303034/2015-7 and 303731/2019-2}

\begin{document}

\begin{abstract} We study bounded actions of groups and semigroups $G$ on exact sequences of Banach spaces from the point of view of quasilinear maps, characterize the actions on the twisted sum space by commutator estimates and introduce the associated notions of $G$-centralizer and $G$-equivariant map. We will show that when (A) $G$ is an  amenable group and (U) the target space is complemented in its bidual by a $G$-equivariant projection, then uniformly bounded compatible families of operators generate bounded actions on the twisted sum space; that compatible
quasilinear maps are linear perturbations of $G$-centralizers; and that, under (A) and (U), $G$-centralizers are bounded perturbations of $G$-equivariant maps. The previous results are optimal. Several examples and counterexamples are presented involving the action of the isometry group of $L_p(0,1), p\neq 2$ on the Kalton-Peck space $Z_p$, certain non-unitarizable triangular representations of the free group $\F_\infty$ on the Hilbert space, the compatibility of complex structures on twisted sums, or bounded actions on the interpolation scale of $L_p$-spaces. In the last section we consider the category of $G$-Banach spaces and study its exact sequences, showing that, under (A) and (U), $G$-splitting and usual splitting coincide.\end{abstract}
\maketitle


\section{Introduction} This paper emerges from the observation of similarities between different problems:
\begin{itemize}
\item[(a)] The construction of non-unitarizable, bounded, representations of  the free group $\F_\infty$ on the Hilbert space. \item[(b)] The construction of operators on the Kalton-Peck space $Z_2$.
\item[(c)] The differential process associated to a complex interpolation scheme.
\item[(d)] Actions of groups on exact sequences of Banach spaces.
\item[(e)] The existence of certain bounded groups of isomorphisms on the space $c_0$.
\end{itemize}

In all cases, certain non-linear maps (including sometimes linear unbounded maps) and their compatibility with the action of some groups of operators through commutator estimates are at the core of the problem.
In (a), a linear unbounded map used to define a non-inner derivation and therefore a non-unitarizable representation \cite{PS}; in (b) the Kalton-Peck map $\KP$ \cite{KP}; in (c) is the ``$\Omega$-operator" mentioned by several authors Cwikel et al. \cite{CJMR}, Rochberg \cite{rochberg}, Carro \cite{Carro}... And in (d) we encounter the Banach version of the three-representation problem (see \cite{kuch}).
Another unexpected example (e) is a linear unbounded map used in \cite{AFGR} to define a non-trivial derivation in a study of bounded groups acting on $c_0$.
Connections between some of those elements had been observed before: for instance, Kalton observed \cite{kaltonams,K} that while working on K\"othe spaces, $\Omega$-operators are a special type of quasilinear map, that he called $L_\infty$-centralizers, intimately connected with the complex interpolation scale.

To obtain a unified point of view we consider a group or semigroup $G$, two bounded actions $u, v$ on two Banach spaces $X,Y$ and introduce the notion of $G$-centralizer $\Omega$: this allows us to construct an exact sequence $0\To X \to X\oplus_\Omega Y \To Y \To 0$ of Banach spaces and connect possible actions of $G$ on the twisted sum space $X\oplus_\Omega Y$  with commutator estimates involving $\Omega$ and derivations of the group.

Our results move at two levels, the theoretical and the examples/counterexamples. On the former side, the theory we display could be described as follows. Let $(A)$ be the condition: $G$ is amenable and let $(U)$ be: $X$ is $G$-complemented in its bidual (meaning, complemented by a $G$-equivariant projection).
\begin{itemize}
\item Triangular representations of groups on the Hilbert space $H$ may be interpreted as diagonal representations on $H$ seen as a twisted Hilbert space.
\item Under $(A)$ and $(U)$, a uniformly bounded family $(T_g)_{g\in G}$ of operators yielding commutative diagrams
$$\begin{CD}
0@>>> X @>>> X\oplus_\Omega Y @>>> Y @>>> 0\\
&&@V{u(g)}VV @V{T_g}VV @VV{v(g)}V\\
0@>>> X @>>> X\oplus_\Omega Y @>>> Y @>>> 0\end{CD}$$
provides a compatible action of $G$ on $X\oplus_\Omega Y$.
\item Every $\Omega$ compatible with an action on $X\oplus_\Omega Y$ is a linear perturbation of a $G$-centralizer (possibly with values in a larger target space).
\item Under $(A)$ and $(U)$, every $G$-centralizer is a bounded perturbation of a $G$-equivariant map.
\item We introduce the category of $G$-Banach spaces and show that, under $(A)$ and $(U)$, a $G$-exact sequence of $G$-spaces $G$-splits if and only if it splits as an exact sequence of Banach spaces.
\end{itemize}

The results above are optimal because on the side of counterexamples:
\begin{itemize}
\item We will use a construction of Pytlic and Szwarc \cite{PS} to show a centralizer that is not a bounded perturbation of an equivariant centralizer when $G$ is non-amenable. We will provide another counterexample, inspired from \cite{AFGR}, when $X$ is not complemented in its bidual.
\item We will show that the Kalton-Peck map is not a centralizer for the groups of isometries on $L_p, p \neq 2$ or isometries preserving disjointness on $L_2$. It is however compatible with the actions of those groups.
\item In the case of the group of isometries of $L_2$, the Kalton-Peck map is not even compatible with the action of that group.
\end{itemize}

There are specific sections devoted to actions of groups on complex interpolation scales, on Kalton-Peck spaces and on higher order Rochberg spaces, as well as to the connections between $G$-centralizers and almost transitivity.

\section{The Background}

Let $X, Y$ be Banach spaces. In what follows $\Delta\subset Y$ represents a dense subspace of $Y$ (sometimes called the \emph{intersection} space), while $\Sigma$ represents the \emph{ambient} space, namely, a vector space containing $X$. When necessary, we will alternatively assume that there is an injective linear map $\jmath: X\to \Sigma$, in which case the subspace $\jmath[X]$ will be normed with $\|\jmath(x)\|=\|x\|_X$. A homogeneous map $\Omega: \Delta \To \Sigma$ is a $z$-linear map $\Delta \lop X$ if there is a constant $C$ such that for all finite sequences of elements $y_1, \dots, y_N \in \Delta$
\begin{itemize}
\item[(a)] $\Omega(\sum_{n=1}^N y_n)- \sum_{n=1}^N \Omega(y_n)\in \jmath[X]$
\item[(b)] $\|\Omega(\sum_{n=1}^N y_n)- \sum_{n=1}^N \Omega(y_n)\|_{\jmath[X]}\leq C\sum_{n=1}^N \|y_n\|_Y $ .
\end{itemize}
In this paper we mainly use the notation $\Omega: \Delta \lop X$, although $\Omega: Y \lop X$ can also be appear when the choice of $\Delta$ is clear from the context or irrelevant. When condition (b) holds only for pairs of points then $\Omega$ is called quasilinear. 
A quasilinear map $\Omega: \Delta \lop X$ with ambient space $\Sigma$ is said to be trivial if there is a linear (not necessarily continuous) map $L: \Delta \To \Sigma$ such  that $\Omega-L: \Delta \to \jmath[X]$ is bounded, in the sense that $\|\Omega(y)-L(y)\|_{\jmath[X]}\leq M\|y\|_Y$ for some constant $M$ and all $y\in \Delta$. Two quasilinear maps $\Phi, \Psi: \Delta \lop X $ with ambient space $\Sigma$ are said to be equivalent , and denoted $\Phi\sim \Psi$, (resp. boundedly equivalent) if $\Phi - \Psi$ is trivial (resp. $\Phi - \Psi:\Delta \To X$ is bounded). The twisted sum generated by a quasilinear map $\Omega$ is the completion $X \oplus_\Omega Y$ of the space $ X \oplus_\Omega \Delta:= \{(\omega, y) \in \Sigma \times \Delta: \omega - \Omega y \in \jmath[X]\}$ endowed with the quasi-norm $\|y\|_{Y}+\|\omega - \Omega y\|_{\jmath[X]}$. From now on, except when in need, we shall omit the embedding $\jmath$. If $\Omega$ is $z$-linear then $\|\cdot\|_\Omega$ is equivalent to a norm, and thus $X\oplus_\Omega Y$ is a Banach space. Kalton showed \cite{kalt} that quasilinear maps on $B$-convex Banach spaces are $z$-linear; therefore, twisted sums in which the quotient space is $B$-convex are Banach spaces. The map $\imath:X\To X\oplus_\Omega Y$ given by $\imath(y)=(y,0)$ is an into isometry and the map $\pi:X\oplus_\Omega Y\To Y$ given by $\pi(\omega,x)=x$ takes the unit ball of $X\oplus_\Omega Y$ onto that of $X$. These spaces and operators form a short exact sequence $\xymatrix{
0\ar[r] & X \ar[r]^-\imath & X\oplus_\Omega Y \ar[r]^-\pi & Y\ar[r] & 0}$ that shall be referred to as the sequence generated by $\Omega$. Two exact sequences of Banach spaces are called \emph{equivalent} when there is an operator $T$ making the diagram

$$\xymatrixrowsep{1pc}\xymatrix{&&Z\ar[dr]\ar[dd]^T&\\
0\ar[r]&X \ar[ur]\ar[dr]&&Y\ar[r]&0\\
&& Z'\ar[ur]&&}$$
commute. When $Z=X\oplus_\Omega Y$ and $Z'=X\oplus_\Phi Y$ that happens if and only if $\Phi$ and $\Psi$ are equivalent maps.

Given two maps $S, T$, its commutator is defined as $[S,T]= ST - TS$ provided this makes sense. We will need to use a generalized commutator for three maps defined as $[u,\Omega, v] = u\Omega  -\Omega v$, whenever this makes sense.

\section{G-centralizers}
\adef Let $G$ be a semigroup. A $G$-space is a normed space $X$ equipped with a bounded action $G\times X \to X$; namely, a morphism of semigroups  $u: G\to \mathfrak L(X)$ such that $\gamma(u) := \sup \{\|u(g)\|: g\in G\}<\infty$.\zdef

Note that we do not require $G$ to carry any topology and therefore there is no continuity involved with respect to $G$ (alternatively we may think of $G$ as discrete). Occasionally we will consider unbounded or even nonlinear actions, but that will be explicitly said. Paramount examples of bounded actions are (see the appropriate section in the paper for unexplained terms): (a) The action of the group of units $\mathcal U$  of $L_\infty(S, \mu)$ on either $L_\infty$-Banach modules or K\"othe spaces. In particular, the action of the Cantor group $2^\omega=\{-1,+1\}^\N$ on spaces with unconditional basis or that of the group $2^{<\omega}$ of elements of $2^\omega$ that are eventually $1$ on $c$. (b) The action of the group generated by measure preserving rearrangements of the base space and change of signs on rearrangement invariant K\"othe spaces. (c) The action of the group ${\rm Isom}(X)$ of isometries of $X$ on $X$. (d) The action of the group ${\rm Isom_{disj}}(L_2)$ of isometries that preserve disjointness on $L_2$. (e) The natural left regular action of the free group $\F_\infty$ on the Hilbert space seen as $\ell_2(\F_\infty)$.

Given an exact sequence $0\to X\to Z\to Y\to 0$ of $G$-spaces, we will agree for the rest of this paper that the action of $G$ on $X$ will be denoted $u$, that on $Y$ will be denoted $v$ and that on $Z$ will be denoted $\lambda$.

\adef\label{listofdef} Let $G$ be a semigroup. \begin{description}
\item[$G$-operator] An operator (resp. a linear map) $T: X\to Y$ between two $G$-spaces $X$ and $Y$ is a $G$-operator (resp. a $G$-linear map) if $v(g)T=Tu(g)$ for all $g \in G$.
\item[$G$-subspace] A $G$-subspace $Y'$ of $Y$ is a subspace of $Y$ such that the canonical inclusion $\imath: Y'\to Y$ is a $G$-operator; in which case we shall also occasionally say that $Y$ is a $G$-superspace of $Y'$.
\item[$G$-centralizer] Let $Y,X$ be $G$-spaces, let $\Delta \subset Y$ be a dense $G$-subspace of $Y$, and let $\Sigma \supset X$ be a $G$-superspace of $X$.  A quasilinear map $\Omega: \Delta \lop X$ with ambient space $\Sigma$ is said to be a $G$-centralizer if the family of maps $[u(g),\Omega,v(g)]$ takes values in $X$ and is uniformly bounded, i.e., there exists a constant $G(\Omega)>0$ such that $\|u(g)\Omega y-\Omega v(g) y\|_X \leq G(\Omega) \|y\|_Y$ for all $g \in G$ and $y\in \Delta$.
\end{description}\zdef

To avoid confusion, let us make explicit that in the above we use the same letter for an action on a $G$-space and for the action by restriction on a $G$-subspace; for example for any $g \in G$, $u(g)$ extends to a map on $\Sigma$ still denoted $u(g)$.

It will spare us a few headaches to briefly discuss the roles of the ``ambient" and ``intersection" spaces $\Sigma$ and $\Delta$. Observe that $\Omega$ is in principle only defined on $\Delta$, not in $Y$.  It is well known \cite{KP} that every quasilinear map $\Omega: \Delta \lop  X$ can be extended to a quasilinear map $\widehat{\Omega}: Y\to X$, but replacing $\Omega$ by this ``artificial" $\widehat{\Omega}$ may spoil the compatibility conditions with $G$, so this approach is useless for us.\\

\noindent \textbf{The issue of the ambient space.} Assume that one has two quasilinear maps $\Omega, \Phi: Y\lop X$, one taking values in the ambient space $\Sigma$ with embedding $\jmath: X \to \Sigma$ and the second in the ambient space $\Xi$ with embedding $\imath: X\to \Xi$. If $\PO$ denotes the pushout space (see e.g. \cite{hmbst}) $\PO= (\Sigma\oplus \Xi)/ X_0$ where $X_0=\{(\jmath x,-\imath x): x\in X\}$ then the maps $\sigma: \Sigma\To \PO$ and
$\xi: \Xi\To \PO$ given by $\sigma(z)=(z, 0)+ X_0$ and $\xi(z)=(0,z)+X_0$ yield a commutative diagram

$$\xymatrixrowsep{0,5pc}\xymatrix{
&\Sigma\ar[dr]^\sigma&\\
X \ar[ur]^\jmath\ar[dr]_\imath&&\PO\\
&\Xi\ar[ur]_\xi}$$
Thus, replacing $\Omega$ by $\sigma\Omega: Y\lop \sigma\jmath [X]$ and $\Phi$ by $\xi\Phi: Y\lop \xi\imath[X]$, and calling $\mathcal X= \sigma\jmath[X]=\xi\imath[X]$, then $\sigma\Omega$ and $\xi\Phi$ are quasilinear maps $Y\lop \mathcal X$ with ambient space $\PO$. We can therefore extend the equivalence notion to quasilinear maps with different ambient spaces, maintaining the notation: $\Omega\sim \Phi$ if and only if  $\sigma \Omega \sim \xi\Phi$. The modification is acceptable since $\Omega \sim \Phi$ if and only if they generate equivalent exact sequences: if $B= \sigma\Omega - \xi\Phi - L: Y\to \mathcal X$ is bounded for some linear map $L: Y\to \PO$ then the following sequences are equivalent
$$\xymatrixrowsep{1pc}\xymatrix{&&\mathcal X\oplus_{\sigma\Omega} Y \ar[dr]\ar[dd]^T&\\
0\ar[r]&\mathcal X\ar[ur]\ar[dr]&&Y\ar[r]&0\\
&& \mathcal X\oplus_{\xi\Phi} Y\ar[ur]&&}$$
via the operator $T(\sigma\jmath x, y)= (\sigma\jmath x - Ly, y)$: indeed, $(\sigma\jmath x - Ly, y)\in \mathcal X\oplus_{\xi\Phi} Y$ because $\sigma\jmath x - Ly - \xi\Phi y = \sigma\jmath x - \sigma\Omega y + By \in \mathcal X$ since  $B: Y\to \mathcal X$.
It is then a simple calculation that
$\|(\sigma\jmath x - Ly, y)\|_{\xi\Phi}$ and
 $\| (\sigma\jmath x, y)\|_{\sigma\Omega}$ are equal up to a multiplicative constant. Given $\Omega: Y\lop X$ with ambient space $\Sigma$ we can choose as ambient space $X\oplus_\Omega Y$ and replace $\Omega$ by $\mhor y =(\Omega y, y)$ to get

\begin{lemma}\label{perturlin} $\Omega \sim \mhor$. More precisely, there is a linear map $\mathcal L: \Delta \To \PO$ such that
$$\xi \mhor = \sigma \Omega + \mathcal L.$$\end{lemma}
\begin{proof} We just consider the commutative diagram

$$\xymatrixrowsep{1pc}\xymatrix{
&\Sigma\ar[dr]^\sigma&\\
X \ar[ur]^\jmath\ar[dr]_\imath&&\PO\\
&X\oplus_{\Omega}Y \ar[ur]_\xi}$$
 and keep track of what $\sigma,\xi,\imath$ do; namely, $\imath(x)=(x,0)$, $\sigma(\omega) = (0, \omega)$ and $\xi(\omega, y) = ((\omega, y), 0)$. Therefore $\sigma\Omega(y)=((0,0), \Omega y)$ and $\xi\mhor (y)= ((\Omega y, y), 0)$. A linear selection $Y\to X\oplus_\Omega Y$ for the natural quotient map has the form $y \to (\ell y, y)$ for some linear map $\ell: Y\to \Sigma$ such that $\Omega y - \ell y \in X$. If we define the linear map $\mathcal L: Y\to \PO$ given by $\mathcal Ly= [(\ell y, y), -\ell y)]$ then we have
\begin{eqnarray*}
\xi \mhor (y) - \sigma \Omega(y)  - \mathcal L(y) &=&  ((\Omega y, y), 0) - ( (0,0), \Omega y) - (\ell y, y), -\ell y)\\
 &=&  ((\Omega y - \ell y, 0), -( \Omega y - \ell y ) )\\
 &=&0
\end{eqnarray*}
since all elements $((x,0), -x)$ with $x\in X$ are $0$ in $\PO$.\end{proof}

When the spaces $\Sigma, \Xi$ carry their own norms and $X$ is a $G$-subspace of both $\Sigma$ and $\Xi$ then we must set $\PO= (\Sigma\oplus \Xi)/ \overline{\Delta}$ to make $\PO$ a $G$-superspace of $X$. However, once actions are involved, a situation appears: given an operator $u: X\to X$ and a quasilinear map $\Omega: \Delta \rightarrow \Sigma$ the composition $u\Omega$ seems impossible. A way to overcome the difficulty is to assume that $u:X\to X$ is the (continuous) restriction of a linear map $\Sigma \to \Sigma$. This is reasonable and, in most occasions, feasible; therefore we usually assume that $\Sigma$ is a $G$-superspace of $X$, as in the definition of $G$-centralizer. \medskip

\noindent\textbf{The issue of the dense subspace}. In classical interpolation theory one considers choices of $\Delta$ so that $\Omega: \Delta\to X$. Adapting their terminology, we can define the \emph{dominion} of quasilinear map $\Omega: Y\lop X$ as the space $\Dom \Omega = \{ y\in Y: \Omega y \in X\}$ endowed with the quasinorm $\|y\|_D = \|\Omega y \|_X + \|y\|_Y$. In this form $\Dom \Omega$ is isometric to the closed subspace $\{(0,y)\in X\oplus_\Omega Y\}$ of $X\oplus_\Omega Y$. More often than not, $\Dom\Omega$ is dense in $Y$, as it is the case in the complex interpolation context (this explains why we impose the assumption on the interpolation couple $(X_0, X_1)$ of being \emph{regular}, which means that $X_0\cap X_1$ is dense in both $X_0$ and $X_1$) and $\Dom \Omega = Y$ if and only if $\Omega: Y  \to X$ is bounded. On the other hand, it may well happen that $\Dom \Omega =\{0\}$: see \cite{noncom}, Proposition \ref{thekp} or the example after Proposition \ref{222}.

And again, when an action $v$ of $G$ on $Y$ is involved, we need a sound meaning for $\Omega v(g)$, which is achieved by guaranteeing that $v$ leaves $\Delta$ invariant. Still a problem appears when one has two quasilinear maps $\Omega: \Delta\lop X$ and $\Phi: \Delta'\lop X$ defined on  different dense subspaces $\Delta, \Delta'\subset Y$. In this case we cannot consider them defined on the same dense subspace by  making a simple intersection since it could well be that $\Delta\cap \Delta'=\{0\}$. In most cases the choice of a common $\Delta$ is natural, but, in general, one has to be careful with this point.\\

We are ready to start our study with a simple observation whose interest will be shown later: in the Banach ambient, the action of a group $G$ induces an action of the semigroup $\ell_1(G)$.

\begin{lemma} \label{1G} A $z$-linear $G$-centralizer between $G$-Banach spaces is an $\ell_1(G)$-centralizer between the associated $\ell_1(G)$-modules.
\end{lemma}
\begin{proof} Indeed, given $\sum_i |\lambda_i| = 1$,
$$\|\Omega\left (\sum_ i \lambda_i u(g_i) y\right ) -\sum_i \lambda_ i v(g_i) \Omega y\|
\leq \| \sum_i \lambda_i \Omega(u(g_i) y) -\sum_i \lambda_ i v(g_i) \Omega y\| + \sum_i \lambda_i \|u(g_i) y\|$$
$$\leq (G(\Omega)+1) \max \{\gamma(u), \gamma(v)\}\|y\|.\qedhere$$
\end{proof}

Our first objective is the three-representation problem that Kuchment considers in \cite{kuch}: given an exact sequence $0 \to X \to Z \to Y \to 0$ and some group $G$ acting on $Y, Z$ and $X$ in a compatible way, to what extent the action on $Z$ can be recovered from the actions on $X$ and $Y$. Or else: given $u, v$, how to obtain a compatible action $\lambda$ on $X\oplus_\Omega Y$?
\adef Let $0\to X\to Z\to Y\to 0$ be an exact sequence. Assume that $X,Y$ are $G$-spaces. An action $\lambda$ of $G$ on $Z$ will be called compatible with the sequence if for each $g\in G$ there is a commutative diagram
$$\begin{CD}
0@>>> X @>>> Z @>>> Y @>>> 0\\
&&@V{u(g)}VV @V{\lambda(g)}VV @VV{u(g)}V\\
0@>>> X @>>> Z @>>> Y @>>> 0\end{CD}$$\zdef

\

Compatibility is a homological notion: $G$ is compatible with a sequence if and only if its it compatible with any equivalent sequence. The existence of compatible actions and $G$-centralizers are connected:

\begin{prop}\label{baba} Let $0\to X\to Z\to Y\to 0$ be an exact sequence in which $X,Y$ are $G$-spaces. $\Omega: \Delta \lop X$ is a $G$-centralizer if and only if the diagonal action $g \mapsto \lambda(g) = \begin{pmatrix} u(g) & 0 \\ 0 & v(g) \end{pmatrix}$ on $X \oplus_\Omega Y$ is compatible and bounded.
\end{prop}
\begin{proof} Observe that the action $\lambda$ is defined first on $X \oplus_\Omega \Delta$ and then extended by density to $X \oplus_\Omega Y=\overline{X \oplus_\Omega \Delta}$.
Now, if $\Omega$ is a $G$-centralizer and $\|(x,y)\|_\Omega\leq 1$, then
\begin{eqnarray*}
\|\lambda(g)\|&\leq& \sup\|( u(g)x,v(g)y) \|_{\Omega} \\
&=& \sup \|u(g)x - \Omega v(g)y \|_X + \|v(g)y \|_Y \\
&=& \sup \|u(g)x - u(g)\Omega y + u(g)\Omega y - \Omega v(g)y \|_X + \|v(g)y \|_Y \\
&=& \sup \|u(g)\|\|x - \Omega y \_X| + \| u(g)\Omega y - \Omega v(g)y \|_X + \|v(g)\|\|y \|_Y\\
&\leq & \max \{\gamma(u), \gamma(v) \}  + \sup \| u(g)\Omega y - \Omega v(g)y \|_X\\
&\leq & \max \{\gamma(u), \gamma(v) \} + G(\Omega).\end{eqnarray*}
On the other hand, the best possible value of $G(\Omega)$ is at most $\gamma(\lambda)$ since $
\| u(g)\Omega y - \Omega v(g)y \|_X = \|\lambda(g)(\Omega y, y)\|_\Omega \leq \|\lambda(g)\| \|y\|_Y$. \end{proof}
.

\begin{lemma}\label{babal} Let $0\to X\to X\oplus_\Omega Y \to Y\to 0$ be an exact sequence in which $X,Y$ are $G$-spaces and
$\Omega: \Delta \lop X$ with ambient space $\Sigma$.  If $\Omega$ is a $G$-centralizer then TFAE:
\begin{itemize}
\item[(a)] The quotient map admits a $G$-linear section $\mathcal L: \Delta\To X\oplus_\Omega Y$.
\item[(b)] There is a $G$-linear map $\ell: \Delta\To \Sigma$ such that $\Omega - \ell: \Delta \To X$.

\noindent If, moreover, $\Delta\subset \Dom(\Omega)$ then (a) and (b) are also equivalent to:

\item[(c)] $y \to (0, y)$ is a $G$-linear section $\Delta\To X\oplus_\Omega Y$ for the quotient map.
\end{itemize}\end{lemma}

\begin{proof} Just set $\mathcal L y= (\ell y, y)$.
\end{proof}
The reason why the hypothesis ``$\Omega$ is a $G$-centralizer" is needed is to be sure that $\lambda(g) = \begin{pmatrix} u(g) & 0 \\ 0 & v(g) \end{pmatrix}$ on $X \oplus_\Omega Y$ is a compatible bounded action on $X\oplus_\Omega Y$. To describe the general situation we first need to develop  a few ideas. The general version of Proposition \ref{baba} will be presented in Proposition \ref{BABA} and that of Lemma \ref{babal} in Lemma \ref{BABAL}. The fact that $G$-centralizers are, informally speaking, quasilinear maps having uniformly bounded commutators $[u(g), \Omega, v(g) ]$ suggests to consider with special attention the case $[u, \Omega, v] = 0$:

\adef A quasilinear map $\Omega: \Delta\lop X$ will be called $G$-equivariant if $[u(g),\Omega,v(g)]=0$ for every $g\in G$.\zdef

In particular, $G$-equivariant linear maps (operators) are the $G$-linear maps (operators) of Defi\-nition \ref{listofdef}. Since $G$-equivariant maps, as well as their bounded perturbations, are $G$-centralizers, it is natural to ask about the converse: Is a $G$-centralizer always a bounded pertubation of a $G$-equivariant map? And its ``linear" version:  is a linear $G$-centralizer always a bounded perturbation of a $G$-linear map?

Let us provide an optimal answer: yes when $G$ is an amenable group and $X$ is adequately complemented in its bidual. Recall that a Banach space is called an ultrasummand \cite{kaltnachr} when it is complemented in its bidual. We transplant this notion to $G$-Banach spaces. By $G$-projection we mean a $G$-operator which is a projection.

\begin{defin} A $G$-Banach space $X$ is a $G$-ultrasummand if there exists a $G$-projection $P:X^{**}\to X$. \end{defin}

Observe that if $X$ is a $G$-space then also $X^{*}$, hence $X^{**}$, is a $G$-space so that $X$ is a $G$-subspace of $X^{**}$, so the above makes sense. One has:

\begin{prop}\label{rrr}\label{222} Let $G$ be an amenable group and let $X,Y$ be $G$-spaces with $X$ a $G$-ultrasummand.
(a) Any (linear) $G$-centralizer $\Omega: Y \lop X$ is a bounded perturbation of a $G$-equivariant (linear) map. (b) A trivial $G$-centralizer $\Omega: Y \lop X$ is boundedly equivalent to a $G$-linear map. \end{prop}
\begin{proof} Proof of (a): since $G$ is amenable, there is a left invariant measure $\mu$ on it, and since $X$ is a $G$-ultrasummand there is a $G$-projection $P:X^{**}\to X$. We define the bounded map $B: Y \rightarrow X$
$$B y=P\left (\int_G (u(g^{-1})\Omega v(g)y -\Omega y)d\mu\right) $$
where we integrate in the weak* sense. If $h \in G$ then
\begin{eqnarray*}
B (v(h)y) &=& P\int_G \left(u(g^{-1})\Omega v(g)(v(h)y) -\Omega (v(h)y)\right)d\mu\\
&=&P\int_G (u(h)u(h^{-1}g^{-1}) \Omega v(gh)y -\Omega (v(h)y) ) d\mu\\
&=&P\int_G (u(h)u(h^{-1}g^{-1}) \Omega v(gh)y -u(h)\Omega y + u(h)\Omega y- \Omega (v(h)y) ) d\mu\\
&=&P\left(u(h) \int_G (u(h^{-1}g^{-1}) \Omega v(gh)y -\Omega y)d\mu  + \int_G (u(h)\Omega y- \Omega (v(h)y) ) d\mu\right)\\
&=& u(h) By + u(h)\Omega y -\Omega v(h)y\end{eqnarray*}
and therefore $[u(h),B,v(h)] = -[u(h),\Omega,v(h)]$, from where $[u(g), B+\Omega,  v(g)]=0$ for all $g \in G$. Namely, $B+\Omega$ is $G$-equivariant. The second part is clear: when $\Omega$ is linear, $B$ is also linear. Proof of (b): if $\Omega = B +L$ with $B$ bounded and $L$ linear, $L$ must also be a $G$-centralizer. Then apply (a). \end{proof}

Part (b) extends \cite[Lemma 1]{jesusjesusfelix}: a trivial $L_\infty$-centralizer is a bounded perturbation of a linear $L_\infty$-centralizer. As announced, the previous solution is optimal since the amenability condition is necessary. Let us put the counterexample in the proper context. As was proved by Day \cite{day} and Dixmier
\cite{dix}, a bounded representation of a countable amenable group on the Hilbert space is unitarizable, meaning that it is a unitary representation in some equivalent Hilbert norm.
Ehrenpreis and Mautner \cite{EM} provide a non-unitarizable bounded representation of a countable group on the Hilbert space.
What is now known as the Dixmier problem asks whether unitarizability of all bounded representations of a countable group characterizes amenability.

Regarding the non-amenable free group  $\F_\infty$ with countably infinitely many generators, Pytlic and Szwarc \cite{PS}, see also \cite{O,P}, showed the existence of a bounded, non-unitarizable representation of $\F_\infty$ on the sum $H \oplus H$ of two copies of the Hilbert space. The authors of \cite{FR2} used this example to investigate transitivity properties of bounded actions on the Hilbert space, and we now follow their lines with another perspective in mind. As in \cite{FR2} we extend the action of $\F_\infty$ to ${\rm Aut}(\mathbb T)$ where $\mathbb T$ is the Cayley graph of $\F_\infty$ with respect to its free generating set. Indeed, ${\rm Aut}(\mathbb T)$ acts in a natural way on $\ell_2(\mathbb T)$ as well as on
$\ell_\infty(\mathbb T)$ or $\ell_1(\mathbb T)$, by the left regular unitary representation $u$: $u(g)(x_t)_{t \in \mathbb  T}=(x(g^{-1} t))_{t \in \mathbb  T}$. Let $R:\ell_\infty(\mathbb T) \rightarrow \ell_\infty(\mathbb T)$ be $$R(e_t)= \mathrm{weak^*} - \sum_{                                                                                                                          s \in \F_\infty, \quad t<s, \quad|s|=|t|+1} e_s.$$

Since $[u(g),R]: \ell_2(\mathbb T) \rightarrow \ell_2(\mathbb T)$ has norm at most $2$ for all $g \in {\rm Aut}(\mathbb T)$ \cite{FR2}, $R$ is an ${\rm Aut}(\mathbb T)$-centralizer $\ell_2(\mathbb T) \lop \ell_2(\mathbb T)$, which is moreover trivial since it is linear (note that we have chosen $\Delta=\ell_2(T)$ and $\Sigma=\ell_\infty(T)$ here). We may  obtain another ${\rm Aut}(\mathbb T)$-centralizer through  the dual situation of the ``left shift" operator $L: \ell_1(\mathbb T) \to \ell_1(\mathbb T)$ defined as $L(e_t)=e_{\hat{t}}$ where $\hat{t}$ is the predecessor of $t$ along $\mathbb T$, and $L(e_\emptyset)=0$ (here we have chosen  $\Delta=\ell_1(\mathbb T)$
and $\Sigma=\ell_2(\mathbb T)$). Note that both $R$ and $L$ could also be defined as from $\ell_1(\mathbb T)$ to $\ell_\infty(\mathbb T)$, in which setting $L+R$ makes sense. Since $L+R$ commutes with every $g \in {\rm Aut}(\mathbb T)$, we have
$[u(g),L] = -[u(g),R]$
and so $L$ is also an ${\rm Aut}(\mathbb T)$-centralizer
$L: \ell_2(\mathbb T) \lop \ell_2(\mathbb T)$.
One has:

\begin{prop}\label{pysw} The linear ${\rm Aut}(\mathbb T)$-centralizer $R$  is not boundedly equivalent to a linear ${\rm Aut}(\mathbb T)$-equivariant map defined on the whole $\ell_2(\mathbb T)$. The linear ${\rm Aut(T)}$-centralizer $L$ is not boundedly equivalent  to a linear ${\rm Aut}(\mathbb T)$-equivariant map defined on $\Delta=\ell_1(\mathbb T)$.\end{prop}

\begin{proof}  Since $R$ takes values in $\ell_\infty(\mathbb T)$ but not in $\ell_2(\mathbb T)$, such a
 a linear  ${\rm Aut}(\mathbb T)$-equivariant map would have the same property. But it  is proved in \cite{FR2} that any linear (unbounded) map ${\rm Aut}(\mathbb T)$-equivariant map from $\ell_2(\mathbb T)$ to $\ell_\infty(\mathbb T)$ must be homothetic, and in particular it must take value in $\ell_2(\mathbb T)$. Regarding $L$, the linear equivariant map $\ell$ would have to be continuous from $\ell_1(\mathbb T)$ to  $\ell_2(\mathbb T)$. The dual map
would then be continuous from $\ell_2(\mathbb T)$ to $\ell_\infty(\mathbb T)$, and therefore would be homothetic, so $\ell$ itself would be homothetic. So $L$ would be $\|.\|_{\ell_2(\mathbb T)}-\|.\|_{\ell_2(\mathbb T)}$ bounded. This is false, since
for $x=\sum_{t \in N} e_t$, where $N$ is a family of $n$ elements of $\F_\infty$ of length $1$, we have $\|L(x)\|_2=\|n e_{\emptyset}\|_2=n$ while $\|x\|_2=\sqrt{n}.$ \end{proof}


\

We now study the general case, namely, sequences $0\to X \to X\oplus_\Omega Y \to Y\to 0$ in which there is a compatible action $\lambda$ on $X\oplus_\Omega Y$ but it is not ``diagonal". The first observation is that a compatible
action $\lambda$ has necessarily the form
$$\left(
    \begin{array}{cc}
      u(g) & d(g) \\
      0 & v(g) \\
    \end{array}
  \right)$$
with $d(g)$ a linear (not necessarily bounded, even when $\lambda(g)$ is) map from $\Delta$ to $\Sigma$. Observe that a compatible bounded nonlinear action on $X\oplus_\Omega Y$ always exists, and it is given by
$$\left(
    \begin{array}{cc}
      u(g) & -[u(g), \Omega, v(g)] \\
      0 & v(g) \\
    \end{array}
  \right).$$

This sets the key idea of how $d$ could be found: the map $g\to [u(g), \Omega, v(g)]$, that we will denote $[u, \Omega, v]$, is a (nonlinear) \emph{derivation} of $g \mapsto (u(g),v(g)$, in the sense that it is a map $d: G \to \Sigma^\Delta$ such that $d(gh)= u(g)d(h) + d(g)v(h)$. Of course that if $L$ is linear, then $[u, L, v]$ is linear. It could also occur that $\Omega$ and the actions $u,v$ are so well coordinated as to make
$[u, \Omega, v]$ linear: such is the case when $\Omega$ is the Kalton-Peck map, see Section \ref{actkothe}.

Derivations are of course fundamental for the study of unitarizability of bounded representations on the Hilbert space, such as the above representation of ${\rm Aut}(\T)$; we address the reader to Pisier's book \cite{P}. They also have been studied on direct sums of Banach spaces \cite{FR2} but, as far as we know, not on twisted sums. To perform such an study we must begin relaxing the requirement that $[u(g),\Omega,v(g)]$ is linear to ``being at uniform distance to a linear map", in the sense of the next definition:

\adef \label{derivation} Let $X, Y$ be $G$-spaces with respective actions $u$ and $v$. We say that $g \mapsto d(g)$ is a {\em linear derivation} of $(u, v)$ if for all $g \in G$, $d(g): \Delta \To \Sigma$ is a (possibly unbounded) linear map, and $d(gh)=u(g)d(h)+d(g)v(h)$ for all $g,h \in G$.
If, moreover, $\sup_{g \in G}\|[u(g),\Omega,v(g)]+d(g)\|<\infty$ then we will say that $d$ is an {\em $\Omega$-derivation} (of $(u,v)$) on $G$,
or that it is a derivation (of $(u,v)$) associated to $\Omega$.\zdef

We are ready to obtain the general version of Proposition \ref{baba}:

\begin{prop}\label{BABA}\label{compatibility} Let $\Omega: \Delta \lop X$ be a quasi-linear map between two $G$-spaces. TFAE:
\begin{itemize}
\item[(a)] $\lambda(g) = \begin{pmatrix} u(g) & d(g) \\ 0 &  v(g) \end{pmatrix}$ is a compatible bounded action of $G$ on $X \oplus_\Omega Y$.
\item[(b)] $g\to d(g)$ is a linear $\Omega$-derivation of $(u,v)$ on $G$.
\end{itemize}
\end{prop}
\begin{proof} The equality $\lambda(gh)=\lambda(g)\lambda(h)$ means
$$\begin{pmatrix} u(gh) & d(gh) \\ 0 &  v(gh) \end{pmatrix} = \begin{pmatrix} u(g) & d(g) \\ 0 &  v(g) \end{pmatrix}
\begin{pmatrix} u(h) & d(h) \\ 0 &  v(h) \end{pmatrix}= \begin{pmatrix} u(g)u(h) & u(g)d(h) + d(g)v(h) \\ 0 &  v(g)v(h) \end{pmatrix}.$$
The boundedness condition is a straightforward computation.\qedhere
\end{proof}

And, as promised, the general version of Lemma \ref{babal}.

\begin{lemma}\label{BABAL}\label{info}Let $\Omega, \Omega': \Delta \lop X$ be quasilinear maps between $G$-spaces $Y$ and $X$, with ambient space $\Sigma$, and let $L:\Delta\To \Sigma$ be a linear map. Then \begin{itemize}
\item[(a)] $ d(g)= - [u(g), L, v(g)]$ is an $L$-derivation.
\item[(b)] $\Omega$ is a $G$-centralizer if and only if $d=0$ is an $\Omega$-derivation. In particular, homogeneous bounded maps admit associated derivation $d=0$.
\item[(c)] If $d$ is an $\Omega$-derivation and $d\, '$ is an $\Omega'$-derivation then $d + d\,'$ is an $(\Omega + \Omega')$-derivation.
In particular, $\Omega+L$ is a $G$-centralizer if and only if $[u, L, v]$ is an $\Omega$-derivation.

\noindent If, moreover, $\Delta \subset \Dom(\Omega + L)$ then

\item[(d)] $\Omega + L$ is a $G$-centralizer if and only if $\mathcal L: \Delta\To X \oplus_\Omega Y$ given by $\mathcal L(y)= (- Ly, y)$ is a $G$-linear section for the quotient map $X \oplus_\Omega Y \rightarrow Y$.
\end{itemize}\end{lemma} To avoid confusion let us make clear that all derivations in this lemma are meant to be derivations of the given pair of representations $(u,v)$.

\begin{proof} (a) and (b) are clear. (c) is a simple consequence of the fact that $d + d\,'$ is linear and $[u,\Omega + \Omega',v]=
 [u,\Omega, v] + [u,\Omega',v]$. (d) is clearly the general version of Lemma \ref{babal} (c) with a couple of delicate points to check: that $(- Ly, y)\in  X \oplus_\Omega Y$, which is true when $y\in \Dom(\Omega + L)$ and that $\mathcal L$ is $G$-linear. To this end, observe that
$$\begin{pmatrix} u(g) & d(g) \\ 0 & v(g) \end{pmatrix} \begin{pmatrix} -Ly \\ y \end{pmatrix}=
\begin{pmatrix} -Lv(g)y \\ v(g)y \end{pmatrix}$$

is equivalent to $d(g) = [u(g), L, v(g)]$, which is equivalent to $\Omega + L$ is a $G$-centralizer since $d(\Omega + L)= d(\Omega) - [u,L,v]$.\end{proof}

The example around Proposition \ref{pysw} shows two essentially different bounded actions of ${\rm Aut}(\mathbb T)$ on $\ell_2(\mathbb T)\oplus \ell_2(\mathbb T)$: one is the unitary action $\begin{pmatrix} u(g) & 0 \\ 0 & u(g) \end{pmatrix}$ and the other is $\begin{pmatrix} u(g) &  [u(g),L]  \\ 0 & u(g) \end{pmatrix}$. By the above discussion,  this triangular action on $\ell_2(\T) \oplus \ell_2(\T)$ and the diagonal one on
$\ell_2(\mathbb T)\oplus_L \ell_2(\mathbb T)$ are ``the same".
Shifting the classical perspective,  we can therefore reformulate this construction as the remarkable fact that ${\rm Aut}(\mathbb T)$ with its diagonal action, is ``centralized" by two essentially different quasilinear maps: $0$ and $L$.

Thus, all pieces are on the board, except one: how to obtain a linear derivation of a quasilinear $G$-compatible map (assuming it exists)? The context of interpolation will provide some answers, and this is the content of the next section.

\section{Actions on interpolation scales}\label{actions} We now consider exact sequences of $G$-spaces generated by complex interpolation of a scale on which $G$ acts, in a way to be described. We refer to \cite{belo},\cite{CZ} or \cite{RW} (see also \cite{handbook} or \cite{CCFG} for specific details) for sounder information about the complex interpolation method for pairs and their associated differentials. An \emph{interpolation pair} $(X_0, X_1)$ is a pair of Banach spaces, both of them linearly and continuously contained in a larger Hausdorff topological vector space $\Sigma$, which can be
assumed  to be $\Sigma = X_0+X_1$ endowed with the norm
$\|x\|= \inf \{\|x_0\|_0 + \|x_1\|_1: x= x_0 + x_1\; x_j\in X_j \; \mathrm{for}\; j=0,1\}$.
The pair will be called {\em regular} if, additionally, the intersection space $X_0\cap X_1$ is dense in both $X_0$
and $X_1$. We denote by $\SSS$ the complex strip defined by $0<Re(z)<1$. A \emph{Calder\'on space} $\mathscr{C}$ is a certain Banach space of holomorphic functions $F:\overline{\SSS} \to X_0+X_1$ for which the evaluation maps $\delta_z:\mathscr{C}\to \Sigma$ are continuous. This forces the evaluation of the derivatives $\delta_z': \mathscr{C}\to \Sigma$ to be continuous too. The interpolation spaces are defined to be $X_z=\{x\in {\Sigma}: x = f(z) \text{ for some } f\in\mathscr C\}$ endowed with natural quotient norm. There are various possible choices for $\mathscr C$ and  we shall consider either $\mathscr C(\mathbb{S}, X_0+X_1)$, the classical Calder\'on space (see \cite{belo}), or ${\mathcal F}_z^{\infty}$, Daher's space (as in \cite{D} or \cite[Section 5]{CCFG}). If $B_z: X_z\to \mathscr{C}$ is a homogeneous bounded selection for the evaluation map, the \emph{differential map} of the process is $\Omega_z = \delta_z' B_z: X_z \to \Sigma$. This is a quasilinear map $\Omega_z: X_z\lop X_z$. Other choices of a selection $B_z$ lead to boundedly equivalent differentials $\Omega_z$.

An operator $\tau:\Sigma \to \Sigma$ is said to act {\em on the scale} defined by the interpolation pair $(X_0,X_1)$ if it is a bounded operator $X_i \to X_i$, $i=0,1$ \cite{CCFG}. A generalized Riesz-Thorin theorem \cite{RT} yields that $\tau$ is automatically bounded from $X_\theta\to X_\theta$ for all $0<\theta<1$, with an estimate
$\|\tau\|_{{\mathcal L}(X_\theta)} \leq \|\tau\|_{{\mathcal L}(X_0)}^{1-\theta} \|\tau\|_{{\mathcal L}(X_1)}^\theta.$

\begin{defin} Let $(X_0,X_1)$ be a complex interpolation pair. A semigroup $G$ acting on $\Sigma$ is said to {\em act on the scale} if $G$ acts boundedly on $X_i$ for $i=0,1$.\end{defin}

The interpolation estimate  above implies that $G$ also acts on $X_\theta$ for all $0<\theta<1$ and that if $G$ acts as an isometry group on the scale then it also acts as an isometry group on $X_\theta$, $0<\theta<1$, as well as on $\Sigma$ and $X_0 \cap X_1$. The same holds for semigroups of contractions. Moreover, $\mathcal C$ is a $G$-Banach space defined by the action $g^{\mathcal C} (f)(z) = g (f(z))$
with estimate (in the standard situations described above) $\|g^{\mathcal C}\|\leq \|g\|$. Note that the actions $u$ and $v$ in this setting are simply $u(g)=v(g)=g$. Where is our promised derivation? Here: 0.

\begin{prop}\label{Gcentralizer} If $G$ is a semigroup acting on the scale $(X_0, X_1)$ then $\Omega_\theta$ is a $G$-centralizer on $X_\theta$.
\end{prop}

\begin{proof} For $x \in X_\theta$ one has $g^{\mathcal C} \left(B_\theta x\right) -B_\theta (gx)\in \ker \delta_\theta$. Therefore
\begin{eqnarray*} \left \|(g\Omega_\theta-\Omega_\theta g)x\right \|_\theta &=& \|g\delta_\theta' B_\theta x - \delta_\theta' B_\theta (g x)\|_\theta \\
&=& \|\delta_\theta' \left (g^{\mathcal C}(B_\theta x) - B_\theta (g x)\right)\|_\theta\\
&\leq& \|\delta_\theta'|_{\ker \delta_\theta}\| \|g^{\mathcal C}(B_\theta x)  - B_\theta (g x) \|_{\mathcal C} \\
&\leq& \|\delta_\theta'|_{\ker \delta_\theta}\|\left( \|g^{\mathcal C}(B_\theta x)\|_{\mathcal C}+\|  B_\theta (g x) \|_{\mathcal C}\right) \\
&\leq& \|\delta_\theta'|_{\ker \delta_\theta}\| 2\|B_\theta\|\|g\|\; \|x\|_\theta.\qedhere\end{eqnarray*}
\end{proof}

Proposition \ref{Gcentralizer} admits an isometric version that we formulate now. A regular interpolation pair is {\em optimal} if for every $0<\theta<1$, every point in $X_\theta$ admits a unique minimal function in ${\mathcal F}_\theta^{\infty}$, see \cite[Def. 5.7]{CCFG}. Daher proved in
\cite[Prop. 3]{D} that a regular pair of reflexive spaces with $X_0$ strictly convex is optimal.

\begin{corollary}\label{Gequivariant} Let $(X_0,X_1)$ be an optimal  interpolation pair. Then
$\Omega_\theta$ is equivariant with respect to the semigroup of contractions on the scale which act as isometric embeddings on $X_\theta$. In particular, $\Omega_\theta$ is equivariant with respect to the group of isometries acting on the scale.
\end{corollary}

\begin{proof} The map $\Omega_\theta$ is uniquely defined now since $(B_\theta x)(\theta)=x$ and $\|B_\theta x\| = \|x\|_\theta$.
If $g$ is a contraction on the scale, then $g^{\mathcal C}$ also acts as a contraction on the  space ${\mathcal F}_\theta^\infty$.
Since $\|g^{\mathcal C}B_\theta x\| \leq \|B_\theta x\|=\|x\|_\theta=\|gx\|_\theta$ if $g$ is also an isometric embedding on $X_\theta$, and since $g^{\mathcal C}(B_\theta x)(\theta)=gx$, we deduce that $g^{\mathcal C}(B_\theta) x = B_\theta( gx).$ Derivating in $\theta$ implies that $\Omega_\theta g=g\Omega_\theta$. \end{proof}

It is a bit disappointing that a zero derivative is all we got. There is a reason for that: the action of $G$ on the scale $(X_z)$ is constant: $u_z(g)=g, \forall z$. To amend this, consider for each $z$ a bounded action $u_z: G \to \mathfrak L(\Sigma)$
such that $u_z(g)|_{X_z}: X_z\to X_z$.
Recall that a function $f: \mathbb S\to \mathfrak L(Y, X)$ is analytic if for every $y\in Y$ the function $z\to f(z)(y)$ is analytic (here $X$ must be interpreted as a part of $\mathfrak L(X^*, \mathbb C)$).

\adef The family of actions $\mathfrak u = (u_z)$ is analytic if for each $g\in G$ the function $z\to u_z(g) \in \mathfrak L(\Sigma, \Sigma)$ is analytic.\zdef

Assume one has a semigroup $G$ with an action $u$ on $X_\theta$ (only for that fixed $\theta$!) but in such a way that $u=u_\theta$ for some analytic family $(u_z)$ of actions. Thus, the compatible action of $G$ on $X_\theta \oplus_{\Omega_\theta} X_\theta$ will no longer be $\left(
                                                         \begin{array}{cc}
                                                           u(g) & 0 \\
                                                           0 & u(g) \\
                                                         \end{array}
                                                       \right)$. Since $\left(
                                                         \begin{array}{cc}
                                                           u(g) & -[u(g), \Omega_\theta] \\
                                                           0 & u(g) \\
                                                         \end{array}
                                                       \right)$ is a compatible, but nonlinear, bounded action, what we need is to find linear bounded perturbations of $[u(g), \Omega_\theta]$. We use here some ideas of Carro \cite{Carro}:

\begin{lemma}\label{carro} Let $\mathfrak u= (u_z)_{z\in \mathbb S}$ be an analytic family of actions of $G$ on the spaces of the scale $(X_z)_{z\in \mathbb S}$ generated by a regular pair $(X_0, X_1)$. The following map is bounded $$\left [u_\theta(g), \Omega_\theta\right ] + \frac{d u_z (g)}{dz}|_\theta : X_\theta \To X_\theta.$$\end{lemma}
\begin{proof} The key observation is that for $x\in X_\theta$ the function $u_z(g)\left( B_\theta x\right) (z) - B_\theta(u_\theta(g) x) (z)\in \ker \delta_\theta$ which implies that its derivative at $\theta$ must be in $X_\theta$. It only remains to compute
\begin{eqnarray*} \left(u_z(g)B_\theta x(z) - B_\theta(u_\theta(g) x) (z)\right)'(\theta) &=&
u_\theta(g) \Omega_\theta(x)  + \frac{d u_z(g)(x)}{dz} |_\theta - \Omega_\theta(u_\theta(g)x)\\ &=& [u_\theta(g), \Omega_\theta](x) + \frac{d u_z(g)(x)}{dz} |_\theta . \qedhere \end{eqnarray*}
\end{proof}
This means that $\lambda(g) =  \left(\begin{array}{cc} u_\theta(g) &   \frac{d\mathfrak u(g)}{dz}|_\theta \\
                                                           0 & u_\theta(g) \\
                                                         \end{array}
                                                       \right):  X_\theta \oplus_{\Omega_\theta} X_\theta \To X_\theta \oplus_{\Omega_\theta} X_\theta$ is a bounded operator. To obtain a bounded action we need that $\sup_g \|\lambda(g)\|<+\infty$.
Since \begin{eqnarray*}
\left \| \left(\begin{array}{cc}u_\theta(g) &  -[u_\theta(g), \Omega_\theta] \\
                                                           0 & u_\theta(g) \\
                                                         \end{array}
                                                       \right) -
                                                       \left(\begin{array}{cc} u_\theta(g) &  \frac{d u_z(g)}{dz}|_\theta \\
                                                           0 & u_\theta(g) \\
                                                         \end{array}
                                                       \right)\right\|&=&
                                                       \left\| \left(\begin{array}{cc} 0 &  [u_\theta(g), \Omega_\theta]+ \frac{d u_z(g)}{dz}|_\theta \\
                                                           0 & 0 \\
                                                         \end{array}
                                                       \right)\right\|\\
                                                       &=& \|[u_\theta(g), \Omega_\theta] + \frac{d u_z(g)}{dz}|_\theta\| \end{eqnarray*}
and $ \left(\begin{array}{cc} u_\theta(g) &  -[u_\theta(g), \Omega_\theta] \\
                                                           0 & u_\theta(g) \\
                                                         \end{array}
                                                       \right)$ is uniformly bounded, what we need is
$$\sup_{g\in G}\|[u_\theta(g), \Omega_\theta] + \frac{d\mathfrak u(g)}{dz}|_\theta\|<\infty.$$ Now, if $\mathfrak u = (u_z)$ is an analytic family of actions and we are using the standard  Calder\'on space $\mathcal C = \mathcal C(X_0, X_1)$ endowed with the norm $\|h\|_\infty:=\sup \{\|h(it)\|, \|h(1+it)\|: t\in \R\}$ then we set $\gamma(\mathfrak u) = \sup_{g\in G}\sup_{t\in \R} \{ \|u_{it}(g)\|, \|u_{1+it}(g)\| \}$ (since $X_{z+it}=X_z$ for real $t$, we may assume that $u_{z+it}(g)=u_{z}(g)$). We have:
\begin{eqnarray*}
\|u_z(g)\left(B_\theta x)  (z) \right)\|_{\mathcal C} &=& \sup_{t\in \R} \{ \|u_{it}(g)B_\theta x (it)\|, \|u_{1+it}(g) B_\theta x (1+it)\|\}\\
&\leq& \gamma(\mathfrak u) \| B_\theta \|\|x\|.
\end{eqnarray*}
The interpolation inequality yields $\|u_\theta(g)\|\leq \gamma(\mathfrak u)$ and thus one has \begin{eqnarray*}
\left \| [u_\theta(g), \Omega_\theta] + \frac{d u_z(g)}{dz} |_\theta\right\|_\theta&=&
\|\left(u_z(g)B_\theta x(z) - B_\theta(u_\theta(g) x) (z)\right)'(\theta)\|_\theta\\
&\leq& \|\delta_\theta': \ker \delta_\theta \to X_\theta\| \|u_z(g)B_\theta x  - B_\theta(u_\theta(g) x)\|_{\mathscr C}\\
&\leq& \|\delta_\theta': \ker \delta_\theta \to X_\theta\| \left( \|u_z(g)B_\theta x\|_{\mathcal C} +  \|B_\theta\| \|u_\theta(g)\| \|x\|_\theta\right)\\
&\leq& 2\|\delta_\theta': \ker \delta_\theta \to X_\theta\| \gamma (\mathfrak u)\|B_\theta\|\|x\|_\theta.\end{eqnarray*}

All this yields,

\begin{theorem}\label{differential} Let  $(X_0, X_1)$ be a regular pair. Let $\mathfrak u$ be an analytic family of actions of $G$ on the scale $(X_z)_{z\in \mathbb S}$ such that $\gamma(\mathfrak u)<\infty$. Then
$$\lambda(g) =
                      \left(                                   \begin{array}{cc}
                                                           u_\theta(g) &   \frac{d\mathfrak u_z(g)}{dz}|_\theta \\
                                                           0 & u_\theta(g) \\
                                                         \end{array}
                                                       \right)$$
                                                       is a compatible action of $G$ on $X_\theta \oplus_{\Omega_\theta} X_\theta$, or equivalently,
$g \mapsto \frac{d\mathfrak u_z(g)}{dz}|_\theta$ is an $\Omega_\theta$-derivation of $(u_\theta,u_\theta)$. \end{theorem}

It is certainly satisfying that the term ``derivation" agrees here both with the classical meaning and with Definition \ref{derivation}!
Let us provide the first of a series of natural applications of these results. Let $(X_0,X_1)$ be an optimal interpolation pair of uniformly convex and uniformly smooth spaces and $0<\theta<1$. We claim that the semigroup of contractions on $X_\theta$ is compatible with $\Omega_\theta$: To prove it, we set a bounded action $\begin{pmatrix} g & d(g) \\ 0 & g \end{pmatrix}$ on $X_\theta \oplus_{\Omega_\theta} X_\theta$ given by $d(\phi \otimes x)=\Omega_\theta^*(\phi) \otimes x + \phi \otimes \Omega_\theta(x), $ whenever $x \in X_\theta, \phi \in X_\theta^*$, and where $\Omega_\theta^*$ denotes the quasi-linear map induced on $X_\theta^*$ by the identity $X_\theta^*=(X_0^*,X_1^*)_\theta$. If $g(y)=\langle \phi,y \rangle x$ with $\phi, x$ is a contraction of rank $1$ with norm at most $1$ on $X_\theta$, let
$F$ be an extremal for $x\in X_\theta$ and $G$ an extremal for $\phi\in (X_0^*,X_1^*)_\theta$). Then $\mathfrak g_z(y)=\langle G(z),y \rangle F(z)$  defines an analytic family of contractions of rank $1$ on the scale $(X_z)$ such that $\mathfrak g_\theta=g$. Now apply Lemma \ref{carro} after calculating
$$\frac{d \mathfrak  g(y)}{dz}|_\theta = \langle \Omega_\theta^*(\phi), y\rangle x + \langle \phi,y\rangle \Omega_\theta(x).$$


Further applications will be given in Sections \ref{actkothe} and \ref{opz2}.

\section{Actions on K\"othe spaces}\label{kothe}

When working with K\"othe spaces with base measure space $S$, the ambient $\Sigma$ is usually chosen as the space $L_0(S)$ of measurable functions on $S$, and $\Delta$ as $Y$ itself. A K\"othe space is a vector subspace $K$ of $L_0(S)$ endowed with a norm such that $f\in K$ and $|g|\leq |f| $ then $g\in K$ and $\|g\|\leq \|f\|$.  A r.i. K\"othe space over $[0,1]$ is a K\"othe space $X$ such that $f\in X \Rightarrow f\sigma\in X$ for every measure preserving map $\sigma: [0,1]\to [0,1]$. K\"othe spaces are usually considered in their $L_\infty$-module and $L_\infty$-centralizer structures.
The notion of $L_\infty$-centralizer can be subsumed in our notion of $G$-centralizer when the spaces are $B$-convex (e.g. superreflexive), which is actually the most interesting situation regarding interpolation. Indeed, if $\mathcal U$ will denote the group of units of $L_\infty(\mu)$, i.e. of unimodular functions in $L_0(S)$ then

\begin{prop}\label{UL} Let $\Omega: Y \lop X$ be a quasilinear map, with $X,Y$ B-convex K\"othe spaces. Then $\Omega$ is an $\mathcal U$-centralizer if and only if it is an $L_\infty$-centralizer.\end{prop}
\begin{proof} The set of convex combinations of units is dense in the ball of $L_\infty$. If $\Omega$ is a $\mathcal U$-centralizer then,
 by Proposition \ref{1G}, it is a $\ell_1(\mathcal U)$-centralizer. We can use density plus $z$-linear character of $\Omega$ to conclude that $\|\Omega a-a \Omega\| \leq K$ for all $a$ in the unit ball of $L_\infty$. \end{proof}

$\mathcal U$-actions on K\"othe spaces have a somehow ``rigid" nature, whose paradigm is Kalton's stability theorem \cite{K}: the endpoint spaces of an interpolation scale of uniformly convex K\"othe spaces $X_0, X_1$ are uniquely determined, up to equivalence of norms, by the pair formed by the space $X_\theta$ and the differential $\Omega_\theta$, $0<\theta<1$. We additionally have:

\begin{theorem}\label{Geq-centralizer}
Let $(X_0, X_1)$ be an interpolation pair of superreflexive K\"othe spaces on a measure space $S$. Let $G$ be a group containing the group of units $\mathcal U(S)$, acting boundely on $X_\theta$ and acting on $\Sigma$. TFAE:
\begin{itemize}
\item[(a)] $\Omega_\theta$  is a $G$-centralizer.
\item[(b)] $G$ acts on the scale.
\end{itemize}
\end{theorem}\begin{proof} One implication is Proposition \ref{Gcentralizer}. Assume that $\Omega_\theta$ is a $G$-centralizer. For $g \in G$, and $i=0,1$
let $g^{-1}X_i \subset \Sigma$ be endowed with the complete norm $\|x\|^{g}_i=\|gx\|_i$. Form the new Calder\'on space $\mathcal C( g^{-1}X_0, g^{-1}X_1)$ and define an isomorphism $g^{\mathcal C}: \mathcal C(g^{-1}X_0, g^{-1}X_1)\to \mathcal C(X_0, X_1)$ in the form
$g^{\mathcal C}(h)(z)= gh(z)$. This yields $(g^{-1}X_0, g^{-1}X_1)_\theta= g^{-1}X_\theta =X_\theta$, with norm $\|x\|_\theta^{g}=\|gx\|_{X_\theta}$, which is equivalent to $\|.\|_\theta$ with a uniform constant independent of $g$. Since
$G: (g^{-1}X_0, g^{-1}X_1)_\theta \To \mathcal C(g^{-1}X_0, g^{-1}X_1)$ given by
$G(x) =g^{-1}B_\theta(gx)$ is a $(C\sup_{g \in G}\|g\|_\theta^2)$-extremal, we get the differential
$$\mhor_\theta(x) =(G(x))^{\prime}(\theta)= g^{-1}(B_\theta(gx))^{\prime}(\theta)=g^{-1}\Omega_\theta (gx)$$

Since $\Omega_\theta$ is a $G$-centralizer, $\mhor_\theta$ is boundedly equivalent to $\Omega_\theta$, with a uniform constant on $g$. Since $G$ contains the group $\mathcal U$ of units, $\Omega_\theta$ and $\mhor_\theta$ are $L_\infty$-centralizers. Kalton's stability theorem
will imply that the norms $\|.\|_i$ and $\|.\|_i^{g}$ are equivalent, with a constant independent of $g \in G$, as soon as we
amend in the next Lemma the required amalgamation. In conclusion, that $G$ acts on the scale.
\end{proof}

We will need to simultaneously consider differentials in various scales, so we will denote $\Omega^W$ the differential generated by $W:=(W_0, W_1)$.

\begin{lemma}\label{quantK} There exists a function $K(\cdot)$ such that whenever $X:=(X_0, X_1)$ and $Y:=(Y_0, Y_1)$ are interpolation pairs of superreflexive K\"othe spaces on the same measure space such that $Y_\theta=X_\theta$, with $C$-equivalence of norms, and $\Omega_\theta^X$ and $\Omega_\theta^Y$ are $C$-boundedly equivalent then the norms $\|\cdot\|_{X_i}$ and $\|\cdot\|_{Y_i}$ are $K(C)$-equivalent for $i=0,1$.\end{lemma}

\begin{proof} Otherwise, pick $C$ and couples $(X_0^n,Y_0^n)$, $(Y_0^n,Y_1^n)$ for which the conclusion of the theorem does not hold for $C$ and $K(n)=n$. The pairs $\ell_2(X_i^n)$ and $\ell_2(Y_i^n)$ generate $C$-equivalent interpolation spaces with $C$-boundedly equivalent differentials while their norms are are not equivalent, in contradiction with Kalton's theorem \cite{K} (in the version presented in \cite[Thm. 3.4]{CCFG}).\end{proof}

\section{Actions on Kalton-Peck spaces}\label{actkothe}

Differentials obtained from complex interpolation of pairs $(X_0, X_1)$ of two K\"othe spaces on the same base measure space are $L_\infty$-centralizers. The differential generated by the interpolation pair $(L_\infty(\mu), L_1(\mu))$ deserves special attention. As it is well-known $(L_\infty(\mu), L_1(\mu))_{1/p} = L_p(\mu)$; and if one picks positive normalized $f$ then $B(f)(z)=f^{pz}$ is an  extremal and thus for $\theta=1/p$ one gets $\Omega_\theta(f)= B(f)^{\prime}(\theta)= p  f \log(f)$.
In what follows we will call Kalton-Peck map on $L_p$ the $L_\infty$-centralizer $\KP: L_p \lop L_p$ defined by $\KP(f) = p f  \log \frac{f}{\|f\|}$ (the $p$ is important for duality issues). The twisted sum space $Z_p(\mu) =L_p(\mu) \oplus_{\KP} L_p(\mu)$ will be called the Kalton-Peck space. Especially interesting is the case $L_\infty(\mu)= \ell_\infty$ since Banach spaces with unconditional basis are $\ell_\infty$-modules.

Fix $1<p<\infty$ and let us think now about compatible $\ell_\infty$-actions on the Kalton-Peck space $Z_p$. The Kalton-Peck map has a peculiarity: if $w = (w_n)$ is an infinite sequence of successive normalized blocks in $\ell_p$ and $\tau_w :\ell_p\to \ell_p$ is the operator
$\tau_w(x) = \sum x_nw_n = w\cdot x$ then the commutator $[\tau_w, \KP]$ is linear:
$$p^{-1}[\tau_w, \KP](x) = w \cdot x \log x  - w\cdot x \log(w\cdot x) =  w\cdot x\log x  - w\cdot x (\log w + \log x) = x \cdot w \log w$$
Therefore, if we consider the semigroup $\texttt{BC}_p$ of the block contractions above on $\ell_p$ then we get:

\begin{lemma} There is a compatible bounded action of $\texttt{BC}_p$ on $Z_p$ given by
$$\left(
  \begin{array}{cc}
    \tau_w &  \tau_{\KP w} \\
    0 & \tau_w \\
  \end{array}
\right)$$
\end{lemma}

These operators were introduced by Kalton \cite{Ksymplectic} in the case $p=2$ to obtain isometric complemented copies of $Z_2$ inside $Z_2$. In the next section we will generalize these results.

\section{Actions on Rochberg spaces}\label{opz2}

We refer to \cite{cck} for possible unexplained definitions or facts. Given an interpolation pair $(X_0, X_1)$, the $n^{th}$ Rochberg space  $\mathcal R^n_z$ is defined to be the space $\mathcal R^n_z = \{(\frac{f^{(n-1)}(z)}{(n-1)!}, \dots, f'(z), f(z)): f\in \mathcal F\}$ endowed with its natural quotient norm. It is clear that $\mathcal R^0_z =X_z$ and $\mathcal R^1_z$ is isomorphic to $X_z \oplus_{\Omega_z} X_z$. It was shown in \cite{cck} that Rochberg spaces are connected forming natural exact sequences$$\begin{CD}
0@>>> \mathcal R^m_z @>>> \mathcal R^{n+m}_z @>>> \mathcal R^n @>>> 0
\end{CD}$$
generated by quasilinear maps $\Omega_z^{n,m}:\mathcal R_z^n \lop \mathcal R_z^m$ with ambient space $\Sigma^m$. We are especially interested in the maps
$$\Omega_\theta^{1,m}(x) = (\Omega_\theta^{(n-1)}(x), \Omega_\theta^{(n-2)}(x),  \dots, \Omega_\theta^{(2)}(x), \Omega_\theta^{(1)}(x))$$
with $\Omega_\theta^{(k)}(x) = \frac{d^k}{dz^k}B_\theta(x)|_{\theta}$. The sequences are entwined in natural commutative diagrams
\begin{equation}\label{psidiagram}
\begin{CD}
\mathcal R^k_z   @= \mathcal R^k_z \\
@VVV @VVV\\
\mathcal R^m_z   @>>> \mathcal R^{n+m}_z @>>> \mathcal R^n_z\\
@VVV @VVV @|\\
\mathcal R^{m-k}_z   @>>> \mathcal R^{n+m-k}_z   @>>> \mathcal R^n_z \\
\end{CD}\end{equation}
with short exact rows and columns (even if we omitted the $0$'s). We focus now on the situation in which the spaces of the scale have a common unconditional basis $(e_n)$ and an additional property. For $X$ with basis
$(e_n)$  we will call property $(W)$ the fact that for each normalized block sequence $w= (w_n)$ of $X$, the map $\tau_w: x\To w\cdot x$ is an operator of norm at most $1$ (equivalently, $\|\sum \lambda_n w_n\|\leq \|\sum\lambda_n e_n\|$); and that the maps $\tau_w$ form a semigroup for composition. Identifying $w$ with $\tau_w$, this allows us to see the set  of normalized block sequences $w=(w_n)$ on $X$ as a semigroup $\texttt{Block}_X$ acting on $X$.
Assume the spaces of the scale have property (W), and
given $\theta$, an analytic family of actions of $\texttt{Block}_{X_\theta}$ can be defined as follows: let $B_\theta$ be a homogeneous optimal selector for the evaluation map $\delta_\theta: \mathcal F \to X_\theta$ with the property that $\supp B_\theta(x)(z)\subset \supp x$ for each finitely supported $x$. This makes that for $w\in \texttt{Block}_{X_\theta}$ and all $z$ one has $B_\theta(w)(z)\in \texttt{Block}_{X_z}$. We define the following analytic family of actions $w_z(x)= B_\theta(w)(z)\cdot x$ and note that $w_\theta(x)=w.x$. Therefore
$\frac{dw_z(x)}{dz}|_\theta =   \frac{d}{dz} ( B_\theta(w)(z)\cdot x )|_\theta= \Omega_\theta(w)\cdot x$ and thus, by Theorem \ref{differential}, there is an action on $\mathcal R_z^2$ given by$\lambda_2(w) =
                      \left(                                   \begin{array}{cc}
                                                           w  &   \Omega_\theta(w)  \\
                                                           0 & w \\
                                                         \end{array}
                                                       \right)$ in accordance with the previous result for Kalton-Peck maps. The result can now be iterated for higher order Rochberg spaces to obtain:

\begin{prop} Let $(X_0, X_1)$ be an optimal interpolation pair of spaces such that $X_z$ has property (W) for each $z$. For fixed $\theta$ there is a bounded action of
the semigroup $\texttt{Block}_{X_\theta}$ of normalized block sequences of $X_\theta$  on $\mathcal R_z^n$ given by

$$\left(
    \begin{array}{ccccc}
      w & \Omega_{\theta}^{(1)}(w) & \Omega_\theta^{(2)}(w) & \dots & \Omega_\theta^{(n-1)}(w) \\
      0 & w & \Omega_\theta^{(1)}(w)  & \Omega_\theta^{(2)}(w) & \dots \\
      0 & 0 & w & \Omega_\theta^{(1)}(w)  & \Omega_\theta^{(2)}(w) \\
      \dots & 0 & 0 & w & \Omega_\theta^{(1)}(w) \\
      0 & \dots  & 0 & 0 & w \\
    \end{array}
  \right).$$
\end{prop}

This applies in particular to the scale of $\ell_p$ spaces, and provides new operators on Rochberg spaces that can be used to get insight on their properties. This research will appear elsewhere.

\section{Actions and almost transitivity}\label{almost} An isometric action $u$ of a group $G$ on a space $X$ is said to be {\em almost transitive} if the orbit $u(G).x$ is dense in $S_X$ for some (and therefore for all) $x \in S_X$, \cite{pelczynski}.
A bounded action $u$ of $G$  on $X$  is said to be almost transitive  if there is some $u(G)$-invariant renorming of $X$ for which the isometric action $u$ is almost transitive. The definition is independent of the choice of an invariant renorming, since all these renormings are multiple of each other \cite{cowie}.

\begin{prop}\label{translemma} Assume  $\Omega: Y \lop X$ is a $G$-centralizer and that $G$ acts almost transitively on $Y$. If
$\Dom\Omega \neq 0$ then $\Omega$ is bounded. \end{prop}

\begin{proof} The objective is to show that $\Dom\Omega \neq 0 \Rightarrow \Dom\Omega =Y$. Pick $y \in S_Y$ generating a dense orbit with respect to an equivalent invariant norm for the action $v$ and use also a norm on $X$ invariant under the action $u$. Then $\|\Omega(v(g)y) -u(g)\Omega y\|_X=\|[u(g),\Omega,v(g)]y \|_X \leq C$ therefore $\|\Omega v(g)y\|_Y \leq C+\|\Omega y\|_X,$ so $\Omega$ is bounded on a dense subset of $S_Y$.\end{proof}

$L_\infty$-centralizers acting on K\"othe spaces have dense dominion \cite{felix}. Consequently:

\begin{prop} Let $(X_0,X_1)$ be an interpolation pair with a common K\"othe space structure and let $0<\theta<1$. If $\Omega_\theta$ is unbounded then no bounded group acting on the scale can act almost transitively on $X_\theta$.\end{prop}

\begin{proof} If an almost transitive group $G$ acts boundedly on the scale, $\Omega_\theta$ would be a $G$-centralizer by Proposition \ref{Gcentralizer}, therefore would be bounded by Proposition \ref{translemma}. \end{proof}

Recall from  \cite{watbled} (see also \cite[Propositions 6.1 and 6.2]{CFG}) that if $X$ is a space with a shrinking basis then $(X,\overline{X}^*)_{1/2}$ is a Hilbert space. Therefore, if $X$ is either (a) a supereflexive K\"othe space on a measure space $S$ different from $L_2(S)$, or (b) a space with a shrinking basis such that the differential $\Omega_{1/2}$ generated at $(X,\overline{X}^*)_{1/2}$ is unbounded then no almost transitive bounded group of automorphisms on the Hilbert space $H$ can act boundedly on the scale, i.e. it cannot induce a bounded action on both $X$ and $\overline{X}^*$.\\

On the other hand, $\Dom \Omega=\{0\}$ is perfectly possible, and even $L_\infty$-centralizers acting between mere $L_\infty$-modules can have
trivial domain: the map $\Omega: \ell_2 \lop \ell_2 $ with ambient space $Z_2$ given by $\Omega(x) = (\KP x, x)$ is an $L_\infty$-centralizer since
\begin{eqnarray*} \|\Omega(\xi x) - \xi \Omega(x)\|_{Z_2} &=& \|(\KP (\xi x), \xi x) - \xi (\KP x, x) \|_{Z_2}\\
&=& \|(\KP (\xi x) - \xi \KP x, 0)\|_{Z_2}\\
&=& \|\KP (\xi x) - \xi \KP x)\|_2\\
&=& G(\KP)\|\xi\|_\infty \|x\|_2\end{eqnarray*}
with domain $\Dom(\Omega)  = \{x\in \ell_2: (\KP (x), x) \in \ell_2\} =\{0\}$. It also occurs in noncommutative contexts (see \cite[5.2]{noncom}), where the connection between nontrivial domain and almost-transitive action has been observed in \cite[p.140]{noncom}: ``One may wonder if [...] there is a real obstruction to have centralizers with nontrivial domain". Cabello yields then Example 5.2: where the almost transitive action of states implies that centralizers with nonzero domain are bounded.\\

$\bigstar$ Consider the group ${\rm Isom}(L_p)$ of isometries of $L_p(0,1)$, $p \neq 2$. One has:

\begin{prop}\label{thekp}$\;$
\begin{itemize}
\item $\KP$ is compatible with the natural action of ${\rm Isom}(L_p)$ on $L_p$.
\item $\KP$ is not an ${\rm Isom}(L_p)$-centralizer.
\item $\KP$ is a linear perturbation of an ${\rm Isom}(L_p)$ centralizer with trivial domain.
\end{itemize}\end{prop}
\begin{proof} To  show that $\KP$ is compatible with the action of ${\rm Isom}(L_p)$, observe that the elements of ${\rm Isom}(L_p)$ have the form $T(f)(s)=\varepsilon(s) w(s)^{1/p} (f \circ \phi)(s),$ where $\varepsilon$ is a unimodular map, $\phi$ a Borel isomorphism of $[0,1]$ and $w$ the Radon-Nikodym derivative of $\phi$ (by the Banach-Lamperti's formula \cite[Chapter 3]{isometries}). It follows in particular that $T(hf)=(h  \circ \phi). f$ whenever $h\in L_\infty(0,1)$. We show that, once again, $[\KP,T]$ is linear: if $f$ is a simple function of norm $1$, we have
\begin{eqnarray*} \frac{1}{p}[\KP,T]f &=& (Tf) \log|Tf| -T(f \log f)=(Tf) \log|Tf| -(Tf)\log(|f \circ \phi|)\\
&=& (Tf) \log \frac{|Tf|}{|f \circ \phi|} =\frac{1}{p}\log(w) (Tf). \end{eqnarray*}

An alternative form of finding this compatible action is considering the analytic family of actions $T_z(f)(t)=\varepsilon(t) w(t)^{z} (f \circ \phi)(t)$ to get $\begin{pmatrix} T_\theta & \frac{dT_z}{dz}(\theta) \\ 0 & T_\theta \end{pmatrix}.$

To prove the second part note that the group ${\rm Isom}(L_p)$ contains the units of $L_\infty$ and acts, linearly, on $L_0$ thus, from Theorem \ref{Geq-centralizer} we get that $\KP$ is an ${\rm Isom}(L_p)$-centralizer if and only if ${\rm Isom}(L_p)$ acts boundedly on the scale of $L_p$-spaces. Since
${\rm Isom}(L_p)$ acts almost transitively on $L_p$, see  \cite{isometries}, and $\KP$ has dense domain, we get from Proposition \ref{translemma} that if if ${\rm Isom}(L_p)$ acts boundedly on the scale of $L_p$-spaces then $\KP$ must be bounded on $L_p$, something it is not.

We know from Lemma \ref{perturlin} that there is a linear map $\mathcal L$ such that  $\KP + \mathcal L$ is an ${\rm Isom}(L_p)$-centralizer, precisely,
$\KP + \mathcal L: L_p\To L_p\oplus_{\KP} L_p$ given by $(\KP + \mathcal L)(y) = (\KP y, y)$. This map is an ${\rm Isom}(L_p)$-centralizer (Proposition \ref{fishy}) with trivial domain:
\begin{eqnarray*}
\Dom(\KP + \mathcal L)&=& \{y\in L_p: [((\KP y, y), 0)]\in \xi X\}\\
&=& \{y\in L_p: \exists x\in X: [((\KP y, y), 0)]=((x,0), 0)]\}\\
&=& \{y\in L_p: \exists x, x' \in X : ((\KP y - x , y), 0) = ((x',0), -x')\}\\
&=& \{ 0 \}.\qedhere\end{eqnarray*}
\end{proof}

We can provide additional information about this strange phenomenon; to ease notation we will call $G={\rm Isom}(L_p)$.

\begin{lemma} Let $L: \Delta\To \Sigma$ be a linear map such that $\KP + L$ is a $G$-centralizer. If $\Delta$ is a dense $G$-invariant subspace of $\Dom(\KP)$ then $\Delta \cap \Dom(L)=\{0\}$.\end{lemma}
\begin{proof} If $y \in \Delta \cap \Dom(L)$ then $(\KP + L)y \in L_p$. Since $g(\KP+L)-(\KP+ L)g$ is bounded, then $(\KP+L)z$ belongs to $L_p$ for all $z$ in the $G$-orbit of $y$; and since $\KP z \in L_p$ because $\Delta$ is $G$-invariant, we deduce that $Lz \in L_p$ on the $G$-orbit of $y$.
Let $\Delta' = \mathrm{span} (Gy) \subset \Dom \KP\cap \Dom L$. By quasi-transitivity of $L_p$, $Gy$ is dense on the unit sphere and $z\to (Lz, z)$ is a $G$-linear lifting for the quotient map $L_p\oplus_{\KP} L_p$ on $\Delta'$. Since
$\|(Lz,z)\|=\|(Lv(g)y, v(g)y)\|= \|\lambda(g) y\|\leq C\|y\|$ we actually have a linear bounded lifting on a dense subspace, and $\KP$ should be trivial, which is not.\end{proof}

\noindent In the particular case above, the result follows from $\Dom(\KP) \cap \Dom(\mathcal L) \subset \Dom(\KP + \mathcal L)=\{0\}$.\\

$\bigstar$ The case of the group ${\rm Isom}_{\rm disj}(L_2)$ of isometries of $L_2$ preserving disjointness is analogous: $\KP$ is compatible with the action of ${\rm Isom}_{\rm disj}(L_2)$, it is not an ${\rm Isom}_{\rm disj}(L_2)$-centralizer but it is a linear perturbation of an ${\rm Isom}_{\rm disj}(L_2)$-centralizer.\\

$\bigstar$ An even more stunning situation appears when considering the unitary group ${\rm Isom}(L_2)$

\begin{prop}\label{not2} $\KP$ is not compatible with the natural action of ${\rm Isom}(L_2)$ on $L_2$.\end{prop}

\begin{proof} Our starting point is the fact proved in \cite{CCFM} that some complex structure on $H$ does not extend to a complex structure on $Z_2$. Everything consists in proving that such pathological complex structure may be chosen to be a unitary map. Let $\Psi$ be a quasilinear map on $\ell_2$ and let $[x_i]$ be a finite sequence of $n$ normalized vectors. Following \cite{CCFM} we set
$$ \nabla_{[x_i]} \Psi   ={\rm Ave}_{\pm} \left \|  \Psi  \left ( \sn  \pm x_i\right ) - \sn \pm \Psi(x_i)\right \|,
$$
where the average is  taken over all  the signs $\pm 1$. Assume that ${\rm Isom}(L_2)$ is compatible with $\KP$ and let $g \mapsto d(g)$ be the associated derivation. The linearity of $d(g)$ implies that,
$$\nabla_{[x_i]} g\KP \leq \nabla_{[gx_i]}\KP+\nabla_{[x_i]} D(g)$$
where $D(g) = [g, \KP, g] + d(g)$ as in Lemma \ref{info}. The quantity $\nabla_{[x_i]} B(g)$ is bounded by $C\sqrt{n}$ by the paralelogram law. It is proved in \cite[Subsection 3.2 page 9]{CCFM} that there exist two orthonormal sequences of $n$ vectors $[x_i], [y_i]$ such that $\nabla_{[x_i]}\KP=\frac{1}{2}\sqrt{n}\log n$ and
$\nabla_{[y_i]} \KP \leq M\sqrt{n}$ for some uniform constant $M$. Let $g$ be some unitary operator such that $g(x_i)= y_i$, $i=1,\ldots,n$, we get a contradiction for large $n$. The result translates to any infinite dimensional $L_2$ through the fact that the restriction of $\KP$ to an $\ell_2$-subspace generated by disjoint characteristic functions of intervals coincides, up to a linear term, with the own $\KP$ map on $\ell_2$ \cite{jesusjesusfelix}.\end{proof}

\section{$G$-equivariant maps}\label{equi}

The purpose of this section is showing that if $G$-centralizers are connected with interpolation scales of $G$-spaces,
$G$-equivariant maps are connected with \emph{rigid} interpolation scales. Let us give a precise meaning to that word:

\adef An interpolation pair $(X_0, X_1)$ will be called $\theta$-rigid if whenever $Y_0,Y_1 \subset X_0+X_1$ defines another regular pair of interpolation such that $X_\theta=Y_\theta$ isometrically and $\Omega_\theta^X=\Omega_\theta^Y$, it follows that  $X_t=Y_t$ isometrically, for all $0<t<1$. The pair is said to be rigid, if it is $\theta$-rigid for all $0<\theta<1$. \zdef

As it was announced, typical examples of rigid scales are provided by $p$-convexifications of r.i. K\"othe spaces:

\begin{lemma} When $X$ is an r.i. K\"othe space the pair $(X, L_\infty) $ is rigid.\end{lemma}
\begin{proof} In the case of discrete spaces we apply the previous proposition to the open set $U=\{x=(x_i)_i \in \C^n : x_i \neq 0\ \forall i=1,\ldots,n\}$. It is clear that $x \mapsto x \log(|x|)$ is $C_1$ on some neighborhood of any $y \in U$, so the local Lipschitz property will be satisfied. The same idea applies to the case of r.i. spaces on $[0,1]$.\end{proof}

A rigid pair is such that $X_i=Y_i$ isometrically, $i=0,1$, as soon as $\|x\|_i=\lim_{t \rightarrow i} \|x\|_t, i=0,1$ for $x \in X_{0} \cap X_1$, a condition satisfied for most examples (see \cite{M}). It is an open question of \cite{CCFG} whether optimal pairs of interpolation are rigid, even in the special case in which $\Omega_\theta^X$ is bounded. A positive answer was presented in \cite{CCFG} under the assumption $\Omega_\theta^X=0$, or even when $\Omega_\theta^X$ is linear (under technical restrictions). We present a few additional partial answers:

\begin{prop} Assume $(X_0,X_1)$ is an optimal interpolation pair such that either
\begin{enumerate}
\item[(a)] $X_0$ and $X_1$ have a common monotone basis $(e_n)$. In this case we set $E_n=[e_1,\ldots,e_n]$; or
\item[(b)] $X_0$ and $X_1$ are r.i. K\"othe spaces on $[0,1]$. In this case we let $E_n$ be the subspace generated by the
characteristic functions of the intervals $\big[(k-1)/2^n,k/2^n\big]$, $k=1,\ldots,2^n.$
\end{enumerate} Assume the restriction of $\Omega_\theta$ to $S_{X_\theta} \cap E_n$ is locally Lipschitz on a dense open subset  for each $n$. Then the pair $(X_0,X_1)$ is rigid.
\end{prop}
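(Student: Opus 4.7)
The strategy is a finite-dimensional reduction to the subspaces $E_n$, followed by an identification of minimal functions using the Lipschitz regularity of $\Omega_\theta$. Suppose $(Y_0,Y_1)$ is another regular interpolation pair with $X_\theta=Y_\theta$ isometrically and $\Omega_\theta(X_0,X_1)=\Omega_\theta(Y_0,Y_1)$. In case (a) the monotonicity of the basis makes the partial-sum projection $P_n$ onto $E_n$ norm-one on $X_0$ and $X_1$, hence on every $X_t$ by complex interpolation; in case (b) the conditional expectation onto the $\sigma$-algebra of dyadic intervals of level $n$ plays the same role by a classical r.i.\ argument. Since $X_\theta=Y_\theta$ isometrically, $P_n$ is norm-one on $Y_\theta$ as well. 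Using optimality and the fact that minimal functions lift through norm-one projections, I would first upgrade this to $P_n$ being norm-one on each $Y_t$, thereby reducing the problem to showing that the restricted scales coincide inside each finite-dimensional $E_n$; density of $\bigcup_n E_n$ in every $X_t$ and $Y_t$ would then complete the argument.

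Fix $n$ and consider, for each $x\in E_n$ of norm one at level $\theta$, the unique minimal functions $B^X(\cdot;x)$ and $B^Y(\cdot;x)$ afforded by optimality. Both are $E_n$-valued holomorphic maps on $\overline{\SSS}$ that agree with $x$ at $z=\theta$ and have matching derivatives at $\theta$ (because the two differentials coincide). My plan is to differentiate $x\mapsto B_\theta(\cdot;x)$ along smooth curves in the dense open subset $U\subset S_{E_n^{X_\theta}}$ where $\Omega_\theta$ is locally Lipschitz. The Lipschitz control should yield a differential relation in the $x$-variable for the family of minimal functions whose right-hand side depends only on $\Omega_\theta$; combined with the matching initial data at $\theta$, a Picard-type uniqueness argument would force $B^X(\cdot;x)=B^Y(\cdot;x)$ on $U$, and by homogeneity and continuity on all of $E_n\setminus\{0\}$. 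Since, by optimality, $\|B(k+it;x)\|_{X_k}=\|x\|_\theta$ a.e.\ (and similarly in $Y_k$), equality of minimal functions for a dense set of $x$'s identifies the $X_k$- and $Y_k$-norms on a family of unit vectors large enough, in finite dimensions, to conclude that $X_k=Y_k$ on $E_n$, and hence $X_t=Y_t$ on $E_n$ for every $t$.

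The main obstacle is the rigorous justification of the Picard-type step: minimal functions are global objects on $\overline{\SSS}$ constrained by a boundary norm condition, and one has to verify that the local Lipschitz information about $\Omega_\theta$ at the single level $\theta$ actually propagates equality of the minimal functions throughout the whole strip. In the finite-dimensional setting I would attempt this via a Taylor expansion of $B_\theta(\cdot;x)$ in $x$, whose coefficients are computed iteratively by differentiating the relation obtained from the Lipschitz control, together with a holomorphicity-plus-maximum-principle argument on $\overline{\SSS}$ to conclude global agreement from equality of Taylor series. The subsidiary technical points --- promoting $P_n$ to a norm-one projection on the whole $Y$-scale, and handling the passage from a ``dense open'' subset of $S_{E_n^{X_\theta}}$ to all of it --- should follow from standard interpolation-theoretic arguments once the central identification is in place, but will require care.
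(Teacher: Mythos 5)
There is a genuine gap at the heart of your Picard step, and you have correctly located it yourself: matching value and first derivative of the two minimal functions at the single point $\theta$ is far too weak to force equality of holomorphic functions on the strip, and your proposed repair does not work. The missing ingredient, which is exactly what the paper uses, is the ODE characterization of optimal functions from \cite{CCFG} (Theorem 5.11): for normalized $x\in X_0\cap X_1$, the restriction $F(t)=F_\theta^x(\theta+it)$ of the minimal function to the \emph{vertical line} through $\theta$ satisfies the autonomous equation $F(0)=x$, $F'(t)=i\,\Omega_\theta(F(t))$, and moreover $F(t)$ stays in the unit sphere $S_{X_\theta}$. Combined with the invariance $\Omega_\theta(E_n)\subset E_n$ (\cite{CCFG}, Proposition 5.3), the trajectory lives in $S_{X_\theta}\cap E_n$, which is precisely where the local Lipschitz hypothesis holds; a standard Gronwall estimate, $\max_{0\le s\le t}\|F(s)-G(s)\|\le Kt\max_{0\le s\le t}\|F(s)-G(s)\|$, then gives $F=G$ on a small interval, and the identity theorem for holomorphic functions upgrades this to equality on all of $\SSS$. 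Since the norms $\|\cdot\|_t$ for $0<t<1$ are recovered from the minimal functions, which are now uniquely determined by $\|\cdot\|_\theta$ and $\Omega_\theta$ on a dense set of data, rigidity follows. Your substitute for this --- differentiating $x\mapsto B_\theta(\cdot\,;x)$ in the $x$-variable and running a Taylor expansion whose coefficients are ``computed iteratively by differentiating the relation obtained from the Lipschitz control'' --- cannot be made rigorous: Lipschitz control yields no second or higher derivatives of $\Omega_\theta$, and the dependence of the minimal function on $x$ is in general neither analytic nor even smooth (think of the Kalton--Peck map $x\mapsto x\log(|x|/\|x\|)$, which is merely homogeneous), so there is no Taylor series in $x$ to compare, and the appeal to the maximum principle at that point is vacuous.

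A secondary, also unjustified, step is your promotion of $P_n$ to a norm-one projection on each $Y_t$. The hypothesis of rigidity gives information about $(Y_0,Y_1)$ only at the single level $\theta$ (namely $Y_\theta=X_\theta$ isometrically and equality of differentials); interpolation transfers endpoint bounds inward, not the reverse, so norm-one on $Y_\theta$ says nothing about $Y_0$, $Y_1$, or the other $Y_t$, and ``minimal functions lift through norm-one projections'' is not available for the unknown $Y$-scale. The paper sidesteps projections entirely: the finite-dimensional reduction is achieved purely through the invariance $\Omega_\theta(E_n)\subset E_n$, which confines the ODE trajectories --- for both scales, since they share the same vector field $\Omega_\theta$ and the same initial data --- to the finite-dimensional sphere where the Lipschitz hypothesis applies.
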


\begin{proof} Pick norm one $x \in X_0 \cap X_1$. By \cite[Proposition 5.3.]{CCFG}, $\Omega_\theta(E_n) \subset E_n$ for each $n$. According to \cite[Theorem 5.11]{CCFG}, the optimal analytic function $B_\theta(x)$ satisfies the differential equation
$F'(t)=i\Omega_\theta(B_\theta(x)(\theta+it))$ with initial  condition $F(0)=x$.
Moreover, $B_\theta(x)$ takes values in $S_{X_\theta}$.\medskip

\noindent \textbf{Claim.} \emph{The equation has a unique holomorphic solution with values in $S_{X_\theta}$ in each of the  cases (a) and (b) for $x$ in the corresponding dense open subset}.\

\begin{proof}[Proof of the Claim] Since $\Omega_\theta$ is locally Lipschitz, if $F$ and $G$ satisfy the differential equation
for $x$ in the dense open subset of $S_{X_\theta} \cap E_n$, then
$$\|F(t)-G(t)\|= \|i\int_0^t \Omega_\theta(F(s))-\Omega_\theta(G(s))\|ds \leq K \int_0^t \|F(s)-G(s)\|$$
for some $K$ and $t$ close enough to $0$. So $\max_{0 \leq s \leq t} \|F(s)-G(s)\| \leq Kt \max_{0 \leq s \leq t}\|F(s)-G(s)\|$
and thus $F(s)=G(s)$ on some small enough interval $[0,t]$. By holomorphy $F=G$. \end{proof}

This means that if we have another regular pair $Y_0,Y_1 \subset X_0+X_1$ such that $X_\theta=Y_\theta$ isometrically and $\Omega_\theta^X=\Omega_\theta^Y$ then the opetimal selectors $B_\theta(x)^X = B_\theta(x)^Y$ coincide and therefore
$$\|x\|^X_t = \|B_\theta(x)^X(t)\|^X_t =  \|B_\theta(x)^Y(t)\|^Y_t = \|x\|^Y_t\qedhere$$\end{proof}

Theorem \ref{Geq-centralizer} admits a version for rigid pairs:

\begin{theorem}\label{Geq-equivariant} Let $(X_0,X_1)$ be a rigid interpolation pair, and let $G$ be a group of isometries on $X_\theta$. Then the following are equivalent:
 \begin{itemize}
\item[(a)] $\Omega_\theta$ defined on $X_\theta$ is $G$-equivariant.
\item[(b)] $G$ acts as an isometry group on the interior of the scale.
\end{itemize} \end{theorem}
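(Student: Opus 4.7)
The plan is to establish the three implications (b) $\Rightarrow$ (a), (c) $\Rightarrow$ (b), and (a) $\Rightarrow$ (c); only the last is substantial. The implication (b) $\Rightarrow$ (a) is mere restriction to the dense $G$-invariant subspace $Y_{00}$. For (c) $\Rightarrow$ (b) I would invoke Proposition \ref{Gequivariant} after a reparametrization: since $G$ acts isometrically on $X_t$ for every $0<t<1$, one can fix $0<\theta_0<\theta<\theta_1<1$ and work with the sub-scale $(X_{\theta_0},X_{\theta_1})$, which is still a regular optimal pair (the ambient pair is optimal, being rigid) and on which $G$ now acts isometrically at both endpoints. Proposition \ref{Gequivariant} then gives $G$-equivariance of the differential at the interior parameter corresponding to $\theta$, and this differential coincides with $\Omega_\theta$ up to the affine change of parameter in $\SSS$.

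For the main implication (a) $\Rightarrow$ (c) I will adapt the construction from Theorem \ref{Geq-centralizer}. Fix $g \in G$ and define a new scale by $\|x\|_i^{g} := \|gx\|_{X_i}$ on $Y_{00}$, with completion $X_i^{g}\subset X_\infty$ (using the same conventions on the ambient space as in that earlier proof, in particular that $g^{-1}$ acts on $X_\infty$). The standard interpolation theorem gives $\|x\|_\theta^{g}=\|gx\|_{X_\theta}$, which equals $\|x\|_{X_\theta}$ because $g$ is an isometry of $X_\theta$, so $X_\theta^{g}=X_\theta$ isometrically. Taking $B_\theta^{g}(x):=g^{-1}B_\theta(gx)$ where $B_\theta$ is the unique minimal selection for the original optimal scale, one checks that $B_\theta^{g}(x)(\theta)=x$ and that its Calder\'on norm in the new scale equals $\|B_\theta(gx)\|_{\mathscr C}=\|gx\|_\theta=\|x\|_\theta^{g}$, so $B_\theta^{g}$ is the minimal selection for the new scale. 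Differentiating at $\theta$ gives $\Omega_\theta^{g}(x)=g^{-1}\Omega_\theta(gx)$, which the hypothesis (a) collapses to $\Omega_\theta^{g}=\Omega_\theta$ on $Y_{00}$, and hence as canonical differentials of the two scales. The rigidity hypothesis on $\SSS$ then forces $X_t^{g}=X_t$ isometrically for every $0<t<1$, i.e.\ $\|gx\|_{X_t}=\|x\|_{X_t}$, which is precisely (c).

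The principal technical obstacle I foresee is verifying that the auxiliary scale $(X_0^{g},X_1^{g})$ fits the hypotheses required by the rigidity definition: regularity, inclusion in $X_0+X_1$, optimality, and most delicately the identification $\Omega_\theta^{g}=\Omega_\theta$ as equality of canonical differentials produced by minimal selections, not merely as a bounded equivalence (which would be insufficient to invoke rigidity). These points follow from careful bookkeeping on the ambient space $X_\infty$ and from the uniqueness of minimal selections on an optimal scale; once they are in place, the appeal to rigidity is essentially formal and the implication closes.
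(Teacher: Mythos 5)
Your proposal is correct and follows essentially the same route as the paper: the substantial implication (a) $\Rightarrow$ (c) is proved there exactly as you do, by conjugating the scale via $\|x\|_i^{g}=\|gx\|_i$, checking $\|x\|_\theta^{g}=\|x\|_\theta$, identifying the minimal selection of the new scale as $g^{-1}B_\theta(gx)$ so that its differential is $g^{-1}\Omega_\theta(gx)=\Omega_\theta(x)$, and then invoking rigidity to conclude $X_t^{g}=X_t$ isometrically. The paper disposes of (c) $\Rightarrow$ (b) merely by analogy with Proposition \ref{Gequivariant}, so your reiteration argument on the sub-scale $(X_{\theta_0},X_{\theta_1})$ (needed because (c) only gives isometric action on the interior, not at the endpoints) makes explicit a detail the paper glosses over, and the technical caveats you list (optimality of the sub-pair, equality rather than bounded equivalence of canonical differentials) are exactly the right ones.
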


\begin{proof} $(b) \Rightarrow (a)$  is Proposition \ref{Gequivariant}. The prof of $(a) \Rightarrow (b)$ goes as that of Theorem \ref{Geq-centralizer} until getting $\mhor_\theta (x) = \Omega_\theta(x)$, where the rigidity hypothesis applies to conclude that  $\|gx\|_t=\|x\|^g_t=\|x\|_t$ for $0<t<1$ and all $g \in G$. \end{proof}

As an example the Kalton-Peck map $\KP$ when defined on a $p$-convex K\"othe space is ${\mathcal U}$-equivariant even if it is not equivariant in the associated $L_\infty$-structure. In general, equivariant quasi-linear maps with respect the the module structure seem to be only possible in trivial cases. It is different for linear maps: an $\mathcal U$- linear map $L: Y\to X$ on spaces having unconditional bases is obviously diagonal since $ge_n=\pm e_n$ are the only options; if the bases are symmetric and $G$ is the group of operators acting by change of signs and permutations of the vectors of a symmetric basis, $G$-linear maps are homotheties. However, combining Corollary \ref{222} and Proposition \ref{Gcentralizer} one gets:
\begin{corollary}\label{hhh} Let $(X_0,X_1)$ be an interpolation pair. Assume $X_\theta$ is reflexive and that $G$ is an amenable group acting on the scale. Then\begin{enumerate}
\item[(a)] $\Omega_\theta$ is boundedly equivalent to a $G$-equivariant map.
\item[(b)] If $\Omega_\theta$ is trivial then it is boundedly equivalent to a $G$-linear map.
\end{enumerate}
\end{corollary}

\section{The category of G-Banach spaces and its exact sequences}\label{exact}

We shift now our point of view from ``compatibility of group actions on twisted sums" to ``equivalence of exact sequences of $G$-spaces". We thus introduce the category \textbf{GBan} of $G$-Banach spaces whose objects are Banach $G$-spaces, and whose arrows
are $G$-operators. An exact sequence in \textbf{GBan} is an exact sequence formed by $G$-Banach spaces and $G$-operators.

An exact sequence of $G$-Banach spaces can be described by a pair $(\Omega, d)$, where $\Omega: Y\lop X$ is quasi-linear and $d$ is an associated derivation that determines the bounded action $\lambda(g)=\left(
                                                                                                       \begin{array}{cc}
                                                                                                         u(g) & d(g) \\
                                                                                                         0 & v(g) \\
                                                                                                       \end{array}
                                                                                                     \right)$ on the twisted sum space
$X\oplus_\Omega Y$. Let us transplant Lemma \ref{info} to this language: The following are objects of \textbf{GBan}: \begin{itemize}
\item $(L, -[u,L,v])$ when $L$ is linear.
\item $(B, T)$ when $B$ and $T$ are bounded.
\item $(\Omega, 0)$ if and only if $\Omega$ is a $G$-centralizer.\end{itemize}

In order to consider maps $\Omega$ defined on a fixed dense $G$-subspace $\Delta\subset Y$ (in particular, $\Delta$ must be $G$-invariant), the role of this $\Delta$ must be examined since the exact sequence of $G$-spaces does not depend on $\Delta$ while the representation $(\Omega, d)$ does. 
We can assume that all the maps involved have a common ambient space $\Sigma$ by the observations we made in `The ambient issue' section. 
Observe the following definitions:

\adef \label{Gequivalence} $\;$
\begin{description}
\item[Equivalence of maps] Let $(\Omega_1, d_1)$ and $(\Omega_2, d_2)$ with $\Omega_1, \Omega_2: \Delta \lop X$ quasi-linear and $d_1,d_2$ their associated derivations. They are $G$-equivalent, something we write $(\Omega_1, d_1) \simeq (\Omega_2, d_2)$ if there is a linear map $L: \Delta \lop X$ such that $\Omega_1 - \Omega_2 - L$ is bounded and $d_1 - d_2 = -[u, L, v]$.
\item[Equivalence of sequences] The sequences generated by $\Omega_1$ and $\Omega_2$ are said to be $G$-equivalent if there is a $G$-operator $T$ making the following diagram commute
$$\xymatrixrowsep{1pc}\xymatrix{&&X\oplus_{\Omega_1} Y\ar[dr]\ar[dd]^T&\\
0\ar[r]&X \ar[ur]\ar[dr]&&Y\ar[r]&0\\
&&X\oplus_{\Omega_2} Y \ar[ur]&&}$$
\end{description}
\zdef

Let us check that the two definitions are equivalent. First observe that if $\Delta$ is dense in $Y$ then $X\oplus_\Omega \Delta$ is dense in $X\oplus_\Omega Y$: if $(\omega, y)\in X\oplus_\Omega Y$ and $\delta\in \Delta$ is such that $\|y-\delta\| = \varepsilon$ then $(\omega + \Omega(\delta -y), \delta)\in X\oplus_\Omega Y$ because $\omega + \Omega(\delta -y) = \omega -\Omega y + \Omega(\delta -y) - \Omega \delta + \Omega y \in X$ and $\|(\omega, y) - (\omega + \Omega(\delta -y), \delta)\| =\|(\Omega(y - \delta), y -\delta)\|=2\varepsilon$. The operator
$\tau=\begin{pmatrix} Id & -L \\ 0 & Id \end{pmatrix}$ makes the diagram
$$\xymatrixrowsep{1pc}\xymatrix{&&X\oplus_{\Omega_1} \Delta\ar[dr]\ar[dd]^\tau&\\
0\ar[r]&X \ar[ur]\ar[dr]&&\Delta\ar[r]&0\\
&&X\oplus_{\Omega_2} \Delta \ar[ur]&&}$$
commute and is a $G$-operator since $$ \begin{pmatrix} u(g) & d_2(g) \\ 0 & v(g) \end{pmatrix}
\begin{pmatrix} Id & -L \\ 0 & Id \end{pmatrix}
=
\begin{pmatrix} Id & -L \\ 0 & Id \end{pmatrix}
\begin{pmatrix} u(g) & d_1(g) \\ 0 & v(g) \end{pmatrix}$$
because  $d_2 - d_1 = [u, L, v]$. Finally, $\tau$ can be extended to a $G$-operator $T: X\oplus_{\Omega_1} Y \To X\oplus_{\Omega_2} Y$ by density: set   $(\omega, y) = \lim (\omega_n, \delta_n)$ and define $T(\omega, y)= \lim \tau(\omega_n, \delta_n)$. Since both actions are continuous,
$\lambda_2 T(\omega, y)= \lambda_2 \lim \tau (\omega_n,\delta_n) = \lim \lambda_2 \tau (\omega_n,\delta_n) = \lim \tau \lambda_1 (\omega_n,\delta_n)= T\lambda_1 \lim (\omega_n,\delta_n) = T \lambda_1 (\omega, y)$. The other implication is immediate: the existence of $T$ already implies equivalence of the exact sequences in the category of Banach spaces, so that $\Omega_1-\Omega_2$ is boundedly equivalent to $L$. Thus, there is a vector space structure on the set of pairs $(\Omega, d)$ (defined on the same $\Delta)$ given by $(\Omega_1, d_1) + (\Omega_2, d_2) = (\Omega_1+ \Omega_2, d_1 + d_2)$ and $\lambda(\Omega, d) = (\lambda \Omega, \lambda d_1)$. The zero element is the class of trivial sequences:

\adef \label{Gtrivial} We will say that $(\Omega, d)$ is $G$-trivial, or that it $G$-splits, if $(\Omega, d)\simeq (0,0)$. This occurs if and only if there is a linear map $L$ such that $\Omega - L$ is bounded and $d = -[u,L,v]$.
\zdef

\begin{prop}\label{fishy} Every quasilinear map $\Omega: \Delta\lop X$ defining a $G$-sequence $0\to X \to X\oplus_\Omega Y \to Y \to 0$ of $G$-spaces
is a linear perturbation of a $G$-centralizer.
\end{prop}
\begin{proof} Let $0\to X\to Z\to Y\to 0$ be an exact sequence of $G$-spaces. Set $\Sigma = Z$ as the ambient space equipped with $\lambda(g)$ as the extension of $u(g)$. Any homogeneous bounded selection $B$ for the quotient map: $B: Y\to Z$ is a $G$-centralizer generating the same sequence; in particular, writing $Z=X\oplus_\Omega Y$ and
$\mhor y= (\Omega y, y)$, its associated bounded action has derivation $0$ since the associated diagonal maps
$\lambda'(g)$ are uniformly bounded on $X \oplus_{\mhor} Y$:
\begin{eqnarray*}
\|\lambda'(g)((x,0), y)\|_{\mhor} &=& \|((u(g)x, 0), v(g)y)\|_{\mhor}\\
 &=& \|( (u(g)x, 0) - ( \Omega v(g)y, v(g)y), v(g)y)\|_{\mhor}\\
 &=& \|(u(g)x - \Omega v(g)y, -v(g)y) \|_\Omega + \|v(g)y\|_Y\\
 &=& \|(u(g)x\|_X + \|v(g)y\|_Y\\
 &\leq& \max(\gamma(u),\gamma(v))(\|x\|_X+\|y\|_Y)
 \\
 &=& \max(\gamma(u),\gamma(v)) \|((x,0), y)\|_{\mhor}. \end{eqnarray*}

We perform the standard pushout from Lemma \ref{perturlin} to get $\xi\mhor = \sigma\Omega +\mathcal L$. Just before that lemma it was observed that $T := \left(
                                                                    \begin{array}{cc}
                                                                      Id & - \mathcal L \\
                                                                      0 & Id \\
                                                                    \end{array}
                                                                  \right)$  makes the following diagram commute:
$$\xymatrixrowsep{1pc}\xymatrix{&&\mathcal X\oplus_{\sigma\Omega} Y \ar[dr]\ar[dd]^T
&\\
0\ar[r]&\mathcal X \ar[ur]\ar[dr]&&Y\ar[r]&0\\
&& \mathcal X\oplus_{\xi\mhor} Y\ar[ur]&&}.$$
It only remains to see that $T$ is a $G$-operator, i.e.
\begin{eqnarray*}
(u(g)x-u(g)\mathcal Ly, v(g) y)&=&  \left(\begin{array}{cc}
                                                                      u & 0 \\
                                                                      0 & v \\
                                                                    \end{array}
                                                                  \right) \left(\begin{array}{cc}
                                                                      Id & -\mathcal L \\
                                                                      0 & Id \\
                                                                    \end{array}
                                                                  \right) \left(
                                                                            \begin{array}{c}
                                                                              x \\
                                                                              y \\
                                                                            \end{array}
                                                                          \right)\\
                                                                  &=& \left(\begin{array}{cc}
                                                                      Id & -\mathcal L \\
                                                                      0 & Id \\
                                                                    \end{array}
                                                                  \right)\left(\begin{array}{cc}
                                                                      u & d \\
                                                                      0 & v \\
                                                                    \end{array}
                                                                  \right)\left(
                                                                            \begin{array}{c}
                                                                              x \\
                                                                              y \\
                                                                            \end{array}
                                                                          \right)\\
                                                                          &=&(u(g)x+ d(g)y -\mathcal Lv(g)y, v(g) y)\end{eqnarray*}
namely, $u(g)\mathcal Ly = -d(g)y +\mathcal Lv(g)y$ or, what is the same, $d= -[u,\mathcal L,v]$, which is immediate from $\sigma\Omega = \xi\mhor - \mathcal L$, and the fact that $\Omega'$ is a $G$-centralizer.\end{proof}


We now give two easy lemmas that will help us simplify some proofs later on.

\begin{lemma} \label{trivialG} Let $0\to X \to X\oplus_\Omega Y \to Y \to 0$ be a trivial exact sequence $(\Omega, d)$ of $G$-spaces. If $L: \Delta\lop X$ is any linear map for which $\Omega - L: \Delta\to X$ is bounded, then $d(g) + [u(g), L, v(g)]$ is a uniformly bounded family of operators.\end{lemma}
\begin{proof}
$d(g)+[u(g),L,v(g)]=
(d(g)+[u(g),\Omega,v(g)]) +
[u(g),L-\Omega, v(g)]$
and both terms of the sum are uniformly bounded. \qedhere
\end{proof}


\begin{lemma}\label{bounded0} If $B:Y\to X$ is a bounded map then $(B, d)\simeq (0,d)$\end{lemma}
\begin{proof} It is clear that the formal identity map $X\oplus_B Y \to X\oplus Y$ is a $G$-operator.\end{proof}

A warning is perhaps judicious here: sometimes, quasilinear maps $\Omega: Y\lop X$ are bounded maps $Y\to \Sigma$ but that does not imply that $\Omega$ is equivalent to $0$, let alone $(\Omega, d)\simeq (0,d)$: beware that if $\Omega$ is not bounded {\em with respect  to the $\|.\|_X$-norm}, no identity map $X\oplus_\Omega Y\to X\oplus Y$ exists. $G$-splitting admits natural characterizations, similar to those in the Banach space category. Let us say that a $G$-subspace of a $G$-space is $G$-complemented when it is complemented by a $G$-projection.

\begin{prop}\label{Gsplits} Consider an exact sequence $(\Omega, d)$ of $G$-spaces $0 \rightarrow X \rightarrow Z \rightarrow Y \rightarrow 0$. The following assertions are equivalent:
\begin{itemize}
\item[(i)] The sequence $G$-splits.
\item[(ii)] The quotient map admits a linear continuous $G$-lifting.
\item[(iii)] $X$ admits a $G$-invariant complement.
\item[(iv)] $X$ is $G$-complemented in $Z$.
\end{itemize}
\end{prop}

\begin{proof} A few hints will suffice: If $L$ is a $G$-lifting then $L[Y]$ is a $G$-complement of $X$; $L(y)=(\ell y, y)$ with $\ell-\Omega$ bounded and  $d = -[u, \ell, v]$ is a derivation.\end{proof}

In complete analogy with classical Banach space homology, we can define now the vector space $\Ext_G(Y,X)$ of $G$-equivalence classes of  pairs $(\Omega, d)$. Our next result presents an optimal ``group" version of two theorems of Cabello \cite[Cor. 2]{felix} and \cite{felix2}: the first one asserts that an exact sequence of $L_\infty$-modules that algebraically splits also splits topologically; the second says that when $p\neq q$ the only exact sequence of quasi-Banach $L_\infty$-modules $0\to L_q\to Z\to L_p\to 0$ is the trivial one while, while, as it is well known \cite{cabecastuni}, $\Ext(L_p, L_q)\neq 0$ as Banach spaces. In striking contrast, we prove:

\begin{theorem}\label{Gsame} Let $G$ be a group and let $0\to X \to X\oplus_\Omega Y \to Y \to 0$ be a trivial exact sequence $(\Omega, d)$ of $G$-spaces. If $G$ is amenable and $X$ is a $G$-ultrasummand then the sequence $G$-splits.\end{theorem}

\begin{proof} Assume that $\Omega$ is trivial. We use Proposition \ref{baba} to obtain a $G$-centralizer $\mhor$ so that $(\Omega, d) \simeq (\mhor, 0)$. Since we are told that $(\mhor, 0)$ splits, Lemma \ref{trivialG} yields a linear map $\tau:\Delta \to \Sigma$ for which $\mhor - \tau: \Delta \to X$ is bounded and $(\mhor, 0)\simeq (\mhor - \tau, [u,\tau, v]) \simeq (0, [u, \tau, v])$ by Lemma \ref{bounded0}.
Thus, the proof can be reduced to proving that, under the hypothesis of the theorem, if $B:\Delta \to X$ is a bounded map then
$(B, d)\simeq (B, 0)$ for whatever $d$. Recall from Lemma  \ref{trivialG} that $\{d(g)\}_{g\in G}$ is a uniformly bounded family of operators, and therefore we can form the operator
$$My  =P\left (\int_{g \in G}  u(g^{-1})d(g)y\; d\mu(g)\right)$$
where $P: X^{**} \rightarrow X$ is a $G$-projection.
Let us show that the map $R=\begin{pmatrix} Id & M\\ 0 & Id \end{pmatrix}$ is
a $G$-operator making the diagram
$$\xymatrixrowsep{1pc}\xymatrix{&&X\oplus_{B} Y\ar[dr]\ar[dd]^R&\\
0\ar[r]&X \ar[ur]\ar[dr]&&Y\ar[r]&0\\
&&X\oplus_{B} Y\ar[ur]&&}$$
commute. The only part that is not evident, that $R$ is a $G$-operator means $$ \left(
\begin{array}{cc}
                                                                                                                                    u & 0 \\
                                                                                                                                     0 & v \\
                                                                                                                                   \end{array}
                                                                                                                                 \right)R = R  \left(
                                                                                                                                   \begin{array}{cc}
                                                                                                                                     u & d \\
                                                                                                                                     0 & v\\
                                                                                                                                   \end{array}
                                                                                                                                 \right)$$
namely $u(g)M = d(g)+ Mv(g)$, i.e., $d=[u,M,v]$. We show this:

\begin{eqnarray*} u(g')My &=& u(g')P\left (\int_{g \in G}  u(g^{-1})d(g)y\; d\mu(g)\right)\\
&=&P\left (\int_{g \in G}  u(g'g^{-1})d(g)y \; d\mu(g)\right)\\
\end{eqnarray*}
Call $g'g^{-1} = h^{-1}$ so that $g=hg'$ and thus
\begin{eqnarray*}
&=& P\left (\int_{h \in G}  u(h^{-1})d(hg') y\; d\mu(h)\right)\\
&=& P\left (\int_{h \in G}  u(h^{-1})(u(h)d(g') + d(h)v(g')) y\; d\mu(h)\right)\\
&=& d(g') + P\left (\int_{h \in G}u(h^{-1})d(h)v(g')) y\; d\mu(h)\right)\\
&=& d(g') + Mv(g').\qedhere \end{eqnarray*}\end{proof}

\begin{corollary} Let $G$ be a group.  Let $0\to X \to X\oplus_\Omega
({\rm resp.\ }\oplus_\Phi) Y \to Y \to 0$ be an exact sequence $(\Omega, d)$ (resp. $(\phi, d')$) of $G$-spaces. If $G$ is amenable and $X$ is a $G$-ultrasummand then $(\Omega, d) \simeq (\Phi, d') \Longleftrightarrow \Omega \sim \Phi$.\end{corollary}

Both hypothesis are necessary:
(a) The amenability of $G$ is necessary since $\Ext_{{\rm Aut}(\mathbb T)}(\ell_2(\mathbb T), \ell_2(\mathbb T)$ $\neq 0$: indeed, the sequence $0\to \ell_2(\mathbb T)\to \ell_2(\mathbb T)\to \ell_2(\mathbb T)\to 0$ does not ${\rm Aut}(\mathbb T)$-splits since it has the form $(L,0) $ and, otherwise, $(0, -[u , L])$ would be trivial and then $[u, L]$ would be an inner derivation, a contradiction with the fact that the corresponding representation of $\F_\infty$ from \cite{PS} is non-unitarizable.
(b) The $G$-ultrasummand character of $X$ is necessary since we will show in Section \ref{cantor} that $\Ext_{2^{<\omega}}(\R, c_0)\neq 0$.

Theorem \ref{Gsame} seen together with the result of Cabello mentioned before its statement may seem surprising, since the group $\mathcal U$ of units of the $L_\infty$-module structure is abelian and (for $1<p<\infty$) $L_p$ spaces are reflexive. Let us spell  what these two results together actuallly imply: no non-trivial element of ${\rm Ext}(L_p,L_q)$ can be compatible with the canonical actions of $\mathcal U$ on these two spaces.

A significant consequence of Theorem \ref{Gsame} is a kind of uniqueness result for the possible derivation associated to fixed actions. It will help us at this point to use the classical terminology, which calls {\em inner}  a derivation for which there exists a bounded linear map $A: Y \rightarrow X$ such that $d = [u,A,v]$. Thus, $(\Omega, d_1) \simeq (\Omega, d_2)$ if and only if $d_1 -d_2$ is inner. In the particular case of a direct sum of two copies of a Hilbert space $0 \rightarrow H_1 \rightarrow H_1 \oplus H_2 \rightarrow H_2 \rightarrow 0,$  we recover in this way the classical fact that the representation $\lambda$ is similar to the diagonal unitary representation given by $u,v$ if and only if $d$ is inner. Proposition \ref{Gsplits} may be seen as a generalization of this fact for triangular representations on twisted sums:

\begin{corollary}\label{uniqe} Assume that $G$ is an amenable group, $Y,X$ are $G$-spaces with $X$ a $G$-ultrasummand and $\Omega: Y\lop X$ is a quasilinear map. All compatible actions of $G$ on $X \oplus_\Omega Y$ are conjugate; namely, given two actions $\lambda_1, \lambda_2$ there is $A \in \mathfrak L(Y,X)$ such that for all $g \in G$,
$$\lambda_2(g)=\begin{pmatrix} Id & A \\ 0 & Id \end{pmatrix}
\lambda_1(g)
\begin{pmatrix} Id & -A \\ 0 & Id \end{pmatrix}.$$
\end{corollary}
\begin{proof} Since $d_1-d_2$ is inner, pick $A$ such that
 $d_1(g)-d_2(g)=[u(g),A,v(g)]$.
\end{proof}

\section{Variations and comments}
This final section contains a miscellanea of results and problems connected with the ideas in this paper.

\subsection{From uniformly bounded extensions to actions}
The following situation was mentioned in the abstract: to which extent the existence of a uniformly bounded family of operators
on a twisted sum space compatible with a couple of actions on the subspace and the quotient space induces an action on the twisted sum. We have:

\begin{prop} Let $\Omega: Y\lop X$ be quasi-linear between two $G$-spaces. Assume that there is a uniformly bounded
family of operators $(T_g)_{g\in G}$ such that each $T_g: X \oplus_\Omega Y\to X \oplus_\Omega Y$ forms a commutative diagram
$$\begin{CD}
0@>>> X @>>> X\oplus_\Omega Y @>>> Y @>>> 0\\
&&@V{u(g)}VV @V{T_g}VV @VV{v(g)}V\\
0@>>> X @>>> X\oplus_\Omega Y @>>> Y @>>> 0\end{CD}$$
If $G$ is an amenable group and $X$ is a $G$-ultrasummand then there is a compatible action of
$G$ on $X\oplus_\Omega Y$.\end{prop}

\begin{proof} Each operator $T_g$ has the form $T_g = \left(
                                                        \begin{array}{cc}
                                                          u(g) & \tau_g \\
                                                          0 & v(g) \\
                                                        \end{array}
                                                      \right).$ We may assume wlog that $\tau_{e}=0$ and from that $T_{g^{-1}} = \begin{pmatrix} u(g) & \tau_g \\ 0 & v(g) \end{pmatrix}^{-1}=
\begin{pmatrix} u(g^{-1}) & -u(g^{-1}) \tau_g v(g^{-1}) \\ 0 & v(g^{-1}) \end{pmatrix} = T_g^{-1}$, namely
$$\tau_{g^{-1}} = -u(g^{-1}) \tau_g v(g^{-1}).$$
It is easy to check that the family $\{ [u(g),\Omega, v(g)] + \tau_g\}_{g\in G}$ is uniformly bounded, which implies that the family
$\{u(g)\tau_h + \tau_g v(h) - \tau_{gh}\}_{h\in G}$ is uniformly bounded, and thus after we check that also
$\{u(gh)\tau_{h^{-1}} + \tau_{gh}v(h^{-1}) + \left( u(g)\tau_h + \tau_g v(h) - \tau_{gh}\right) v(h^{-1})\}_{h\in G}$ is uniformly bounded, it follows that
 $\{u(gh)\tau_{h^{-1}} + \tau_{gh}v(h^{-1})\}_{h\in G}$ s uniformly bounded:
\begin{eqnarray*}
\tau_g&=&u(g)\left( u(h)\tau_{h^{-1}} + \tau_h v(h^{-1})\right) + \tau_g\\
&=&u(gh)\tau_{h^{-1}}  + \left( u(g)\tau_h + \tau_g v(h)\right) v(h^{-1})\\
&=&u(gh)\tau_{h^{-1}} + \tau_{gh}v(h^{-1}) + \left( u(g)\tau_h + \tau_g v(h) - \tau_{gh}\right) v(h^{-1}).
\end{eqnarray*}
We can therefore define
$$d(g)=P\left (\int_{h \in G} u(gh)\tau_{h^{-1}}+ \tau_{gh}v(h^{-1}) d\mu\right),$$
where $P$ is a $G$-operator $X^{**} \rightarrow X$,
and compute that $d$ is a derivation: given   $g,k$ in $G$, since $\tau_h v(h^{-1}k)+u(h)\tau_{h^{-1}}v(k) =(\tau_h v(h^{-1})-u(h)u(h^{-1}) \tau_h v(h^{-1}))v(k)=0$ one has:
\begin{eqnarray*}
d(gk)&=& P\left (\int_{h \in G} u(gkh)\tau_{h^{-1}}+ \tau_{gkh}v(h^{-1}) d\mu\right)\\
&=& P\left(\int_{h \in G} u(gkh) \tau_{h^{-1}} + \tau_{gkh} v(h^{-1}) +
 u(g) \tau_{h} v(h^{-1}k) + u(g)u(h)\tau_{h^{-1}} v(k)\right)\\
&=&P(\int_{h \in G} u(gkh)\tau_{h^{-1}} +u(g)\tau_{kh}v(h^{-1})+u(gh)\tau_{h^{-1}}v(k)+\tau_{gh} v(h^{-1}k))\\
&=&u(g)P\left(\int_{h \in G} u(kh)\tau_{h^{-1}} +\tau_{kh}v(h^{-1}) d\mu \right) +
P\left(\int_{h \in G} u(gh)\tau_{h^{-1}}+\tau_{gh} v(h^{-1}) d\mu\right) v(k)\\
&=& u(g)d(k)+d(g)v(k)\qedhere\end{eqnarray*}
\end{proof}

It seems to be open whether the amenability and $G$-ultrasummand hypotheses are necessary.

\subsection{Complex structures.} We now answer now a question complex structures (i.e. operators of square $-{Id}$) on real twisted sum spaces posed in \cite{CCFM}.

\begin{prop} Let $X,Y$ be Banach spaces admitting complex structures $u,v$ and let $\Omega: Y\lop X$ be a quasilinear map. If there exists a bounded operator $T$ on $X \oplus_\Omega Y$ yielding a commutative diagram
$$\begin{CD}
0@>>> X @>>> X\oplus_\Omega Y @>>> Y @>>> 0\\
&&@V{u}VV @V{T}VV @VV{v}V\\
0@>>> X @>>> X\oplus_\Omega Y @>>> Y @>>> 0\end{CD}$$
then $X\oplus_\Omega Y$ admits a complex structure.\end{prop}

\begin{proof} We use the commutative, hence amenable, group $G=\{i,-1,-i,1\}$ for which no $G$-complementation is required since one performs just a finite average, providing that if $\begin{pmatrix} u & \tau \\ 0 & v \end{pmatrix}$ is bounded then
$\begin{pmatrix} u & \frac{1}{2}(\tau+ u \tau v) \\ 0 & v \end{pmatrix}$ is a compatible complex structure.\end{proof}

This proof shows that complex structures exist in $X\oplus_\Omega Y$ as long as $[u, \Omega, v]$ is the sum of a bounded plus a linear map.
The result had been proved in \cite[Corollary 2.2]{CCFM} assuming that $[u,\Omega, v]$ was either bounded or linear.




\subsection{Why singular centralizers on $L_p$ do not exist.} Singular quasilinear maps (those whose restriction to infinite dimensional subspaces is never trivial) are somewhat mysterious objects with many potential applications, whose paramount example is the Kalton-Peck map on $\ell_p$ spaces (but not the Kalton-Peck map on $L_p$ spaces).  The key result \cite{cabecen} is that no singular $L_\infty$-centralizer exists on $L_p$, a result generalized in \cite{CCFM2} to superreflexive K\"othe space over a non-atomic base and the proof consists in showing that there is a copy of $\ell_2$ contained in the domain of the centralizer. In the case of $L_p(0,1)$, it is not hard to see any Rademacher function is in the domain of $\KP$ and then use almost-transitivity plus Proposition \ref{translemma} to get that $\KP$ is not singular on $L_p$.

\subsection{Actions of the Cantor group $2^\omega$ and of $2^{<\omega}$}\label{cantor} The Cantor group is the group of units of $\ell_\infty$ and thus $2^\omega$-centralizers are just $\ell_\infty$-centralizers. Its diagonal action on $\ell_\infty$ restricted to $c_0$ is again the diagonal action, and thus it generates an action on $\ell_\infty/c_0$. We do not have any reasonable idea about a linear derivation $d: \ell_\infty/c_0\to c_0$ of the Cantor group. The subgroup $2^{<\omega}$ of elements of $2^\omega$ that are eventually $1$ is much more manageable. The space $c$ is the living example that $2^{<\omega}$-groups are not $2^{\omega}$-groups. The natural diagonal action of $2^{<\omega}$ on $c$ and $c_0$, that is therefore a $2^{<\omega}$-subspace, induces the identity action on the quotient $\R$. This implies that the exact sequence $0\to c_0 \to c \to \R \to 0$ of $2^{<\omega}$-spaces, which splits as a Banach space sequence, does not split as a $2^{<\omega}$-sequence since no $2^{<\omega}$-lifting $\R\to c$ is possible, and thus $\Ext_{2^{<\omega}}(\R, c_0)\neq 0$, which shows that $G$-complementation is necessary in Theorem \ref{Gsame}. Since the triangular action on $c$ has the form $\lambda(g)= \begin{pmatrix} u(g) & d(g) \\ 0 & Id_\R \end{pmatrix} $ with $u$ the diagonal action and $d(g): \R \rightarrow c_0$ is $d(g)(r) = r \sum_{g_i = - 1} e_i$, this $d(g)$ is a linear derivation on $2^{<\omega}$. We do not know if all actions of $2^{<\omega}$ on $c$ have the form $\begin{pmatrix} u(g) & xd(g) \\ 0 & Id_\R \end{pmatrix}$ for $x \in c$ since Corollary \ref{uniqe} does not apply here. Thus, all elements of $\mathfrak L(\R, c)=c$
define $2^{<\omega}$-centralizers and solving the equation $[u(g), L, g ] r = d(g)r$ yields that all $L(r) = - \frac{r}{2}x$, for $x\in c$, are equivariant $2^{<\omega}$-centralizers. There is a general formulation for this situation: let $X$ be a separable Banach space that we write as $X=\overline {\bigcup_n F_n}$ for an increasing sequence of finite dimensional spaces $F_n$. The space $c_0(F_n)$ admits a natural ``diagonal" action $g(f_n)= (g(n)f_n)$ that naturally extends to the space $c_X(F_n)= \{(f_n): \exists \lim f_n\}$. What is interesting here is that the exact sequence $0 \to  c_0(F_n) \to c_X(F_n) \stackrel{\lim}\to  X \to 0 $ splits if and only if $X$ has the Bounded Approximation Property although it never $2^{<\omega}$-splits since the action induced on $X$ is the identity. All this was based on some ideas of \cite{AFGR}, where an example of an SOT-discrete bounded group of operators on $c_0$ without discrete orbits was provided; the relation with twisted sums  was not observed there.

The difficulty of obtaining derivations $\ell_\infty/c_0\to c_0$ for $2^\omega$ can be confronted with how easily one obtains derivations
for $2^{<\omega}$ on the subspace (here $\mathfrak c$ is the cardinal of the continuum) $c_0(\mathfrak c)$ of $\ell_\infty/c_0$. Consider to this end the Nakamura-Kakutani (see \cite{hmbst}) sequences $0 \To c_0 \To C(\Delta_{\mathcal A} ) \To  c_0(|\mathcal A|) \To 0$ also provide natural examples of $2^{<\omega}$-centraizers: pick $\mathcal A$ an almost disjoint family of subsets of $\N$ (i.e., $|A\cap B|<\infty$ for all $A, B \in \mathcal A$) containing the singletons. The cardinal of the family must be $\aleph_1 \leq |\mathcal A|\leq  \mathfrak c$ since when $|\mathcal A|=\aleph_0$ the sequence splits by Sobczyk's theorem.
We will assume without loss of generality that it is the continuum. Let $\Delta_{\mathcal A} $ be the one-point compactification of the locally  compact space having $\N$ as isolated points and $A\in \mathcal A$ as the only accumulation point of $\{n: n\in A\}$. There is a natural action of $2^{<\omega}$ on $C(\Delta_{\mathcal A})$: $(gf)(n)= g(n)f(n)$ that yields the diagonal action on $c_0$ and induces the identity action on $c_{0}(\mathfrak c)$. Let $c_{00}(\mathfrak c)$ be the dense subspace of all finitely supported sequences. A quasilinear map $\Omega:  c_{00}(\mathfrak c)\lop c_0$ corresponding to this sequence can be easily described: fix a well-order on $\mathfrak c$ and then
for $x\in c_{00}(\mathfrak c)$ write it as $x=\sum \lambda_i e_i$ with the $e_i$ well ordered and define $\Omega( \sum \lambda_i e_i) = \lambda_1 1_{A_1} + \lambda_2 1_{A_2 \setminus A_1} + \cdots + \lambda_n 1_{A_n \setminus (A_1\cup \dots A_{n-1})}$. This is a bounded map $c_{00}(\mathfrak c)\to \ell_\infty$ and therefore a $2^{<\omega}$-centralizer (with derivation 0). On the other hand, $C(\Delta_{\mathcal A})$ is a subspace of $\ell_\infty$ but the natural action of $2^\omega$ does not respect $C(\Delta_{\mathcal A})$.

\subsection{Groups and symmetries} To fix ideas, let us focus on $\N$ and $\ell_\infty$-centralizers on Banach spaces with symmetric basis. A centralizer is symmetric if $\|(\Omega x)\sigma - \Omega (x\sigma)\|\leq C \|x\|$ for every permutation $\sigma$ of $\N$. For instance $\KP$ maps are symmetric. Symmetric centralizers live their own lives (see \cite{K,felix2}) and there is a great difference between working with symmetric and non-symmetric centralizers. But the ideas in this paper allow us, once the action of a group $G$ on a
space with unconditional basis has been established, to also consider the action of the group $G_\Theta= G\times \Theta$, where $\Theta$ is a certain group of permutations of $\N$ in the form $(g,\sigma)x= g(x\sigma)$. When $\Theta$ is the whole group of permutations we will call the group $G_{sym}$. Thus, symmetric centralizers are $2^\omega_{sym}$-centralizers. But other groups of permutations are also important: let $(A_n)$ be a partition $\N=\cup A_n$ of $\N$ into finite sets, $A_n<A_{n+1}$, and let $\Theta$ be the group of
permutations $\sigma$ of $\N$ such that $\sigma A_n = A_n$ for all $n$. It turns out that $G_{\Theta}$-symmetric centralizers are sometimes useful: as the authors of \cite{kaltfou} dismayingly recall, the first author has frequently asked about ``how many"
``different" exact sequences $0\to \ell_1\to Z \to c_0\to 0$ exist. The same problem is addressed in \cite{hmbst}. Let us lodge the problem
in the theory developed in this paper.

\begin{prop} Non equivalent $G_{\Theta}$-centralizers $\ell_2\to \ell_2$ generate non-equivalent exact sequences
$0\to \ell_1\to X \to c_0\to 0$.\end{prop}
\begin{proof} By Theorem \ref{Gsame} it is enough to show that a nontrivial $G_{\Theta}$-centralizer $\Omega$ on $\ell_2$ generates a nontrivial exact sequence $0\to \ell_1\to X \to c_0\to 0$. The sequence
$$\begin{CD}0@>>> \ell_2(A_n) @>>> \ell_2(A_n) \oplus_{\Omega|_{A_n}} \ell_2(A_n) @>>> \ell_2(A_n) @>>> 0\end{CD}$$
splits, but if $\sigma_n^{-1}$ denotes its splitting constant, $\lim \sigma_n =0$ since otherwise $\Omega$ would be
$G_{\Theta}$-trivial. Some subsequence $(\sigma_{k(n)})_n\in \ell_1$, and we will shamelessly assume that it is $\sigma$. Let $D: c_0(\N, \ell_2(A_n)) \to \ell_2(\N, \ell_2(A_n))$ be the ``diagonal" map $D((x_n))=(\sigma_n^{1/2} x_n)$. Form the commutative diagram
$$\begin{CD}
0@>>> \ell_2(\N, \ell_2(A_n)) @>>> \ell_2(\N, \ell_2(A_n) \oplus_\Omega \ell_2(A_n) ) @>>> \ell_2(\N, \ell_2(A_n)) @>>> 0\\
&&@VDVV&&@AADA\\
0@>>> \ell_1(\N, \ell_2(A_n)) @>>> X @>>> c_0(\N, \ell_2(A_n)) @>>> 0
\end{CD}$$ and let us call $\ell_2(\Omega)$ the quasilinear map associated to the upper sequence. The lower sequence has $D\ell_2(\Omega) D$ as associated quasilinear map. Since $D\ell_2(\Omega)D$ is $G_{\Theta}$-equivalent to $\Omega$, the sequence does not split. If $\jmath_n: \ell_2(A_n)\to \ell_1^{2^n}$ is a sequence of $C$-isomorphic embeddings
then $(\jmath_n) D \ell_2(\Omega) D (\jmath_n^*)$ is a nontrivial (see \cite{ccky}) sequence
$$\begin{CD}
0@>>> \ell_1 = \ell_1(\N, \ell_1^{2^n}) @>>> X @>>> c_0(\N, \ell_\infty^{2^n}) = c_0@>>> 0\qedhere\end{CD}$$
\end{proof}

Even if $D\ell_2(\Omega) D$ is a $2^\omega$-centralizer, $(\jmath_n) D \ell_2(\Omega)D (\jmath_n^*)$ is not, and can never be, a $2^\omega$-centralizer: otherwise there would be a compatible action of $2^\omega$ on $X$ and picking any extension $T: X\to \R$ of the sum functional $\ell_1\to \R$ we can form the $2^\omega$-invariant functional $\Lambda(x) = \int_{2^\omega} \varepsilon^{-1}T(\varepsilon x) d\mu$. The road is now paved to define $Q: X\to \ell_1^{**}$ in the form $Q(x)(\varepsilon) = \Lambda(\varepsilon x)$ for $\varepsilon$ an unit of $\ell_\infty$ and extend it linearly to a functional on $\ell_\infty$. Finally, compose with a $2^\omega$-projection $\ell_1^{**}\to \ell_1$. It is however perfectly reasonable to have a $G$-centralizer $\Omega$ and two operators $\alpha, \gamma$ so that $\alpha \Omega \gamma$ is a $G'$-centralizer for two different groups $G,G'$. Researchers willing to  travel this road are advised to do so crossing through the horn gate of \cite{kaltfou}.

\subsection{Additional structures} Other additional structures than group structures may be considered on Banach spaces. See for example the work of Corr\^ea \cite{correa} on exact sequences of operator spaces and a solution to $3$-space problem for OH spaces. It seems to be open whether a relevant theory of groups acting completely boundedly on extension sequences of operator spaces may be developed.


\begin{thebibliography}{99}
\bibitem{AFGR} L. Antunes, V. Ferenczi, S. Grivaux and Ch. Rosendal, {\em Light groups of isomorphisms of Banach spaces
and invariant LUR renormings}, Pacific J. Math., 301 (1) (2019), 31--54.

\bibitem{belo} J. Bergh and J. L\"ofstr\"om. \emph{Interpolation spaces. An introduction,} Springer, 1976.

\bibitem{felix} F. Cabello S\'anchez, {\em Nonlinear centralizers with values in $L_0$}, Nonlinear analysis 88 (2013) 42--50.
\bibitem{felix2} F. Cabello S\'anchez, {\em Nonlinear centralizers in homology}, Math. Ann. 358 (2014) 779--798.
\bibitem{cabecen} F. Cabello S\'anchez, \emph{There is no strictly singular centralizer on $L_p$,} Proc. Amer. Math. Soc. 142 (2014) 949--955.
\bibitem{noncom} F. Cabello S\'anchez, {\em The noncommutative Kalton-Peck spaces}, J. Noncomm. Geom. 11 (2017) 1395-1412.
\bibitem{cabecastuni} F. Cabello S\'anchez, J.M.F. Castillo, \emph{Uniform boundedness and twisted sums of Banach spaces}, Houston J. Math. 30 (2004) 523--536.

\bibitem{hmbst} F. Cabello S\'anchez, J.M.F. Castillo, \emph{Homological methods in Banach space theory}, Cambridge Studies in Advanced Mathematics, 2021. Online ISBN. 9781108778312.


\bibitem{cck} F. Cabello S\'anchez, J.M.F. Castillo, N.J. Kalton, \emph{Complex interpolation and twisted twisted Hilbert spaces},
Pacific J. Math. 276 (2015) 287 - 307.
\bibitem{ccky} F. Cabello S\'anchez, J.M.F. Castillo, N.J. Kalton, D. Yost, \emph{Twisted sums of $C(K)$-spaces} Trans. Amer. Math. Soc. 355 (2003) 4523-4541.
\bibitem{jesusjesusfelix} F. Cabello S\'anchez, J.M.F. Castillo, J. Su\'arez, {\em On strictly singular nonlinear centralizers}, Nonlinear Anal. - TMA 75 (2012) 3313--3321.

\bibitem{kaltfou} F. Cabello S\'anchez, A. Salguero, \emph{When Kalton and Peck met Fourier}, arXiv: 2101.11561.
\bibitem{CZ} A.P. Calder\'on, {\em Intermediate spaces and interpolation, the complex method}, Studia Math. 24 (1964) 113--190.
\bibitem{Carro} M. Carro, {\em Commutators and analytic families of operators}, Proc. Roy. Soc. Edinburgh, 129 (4), 1999, 685--696.
\bibitem{CCFG} J.M.F. Castillo, W.H.G. Corr\^ea, V. Ferenczi, M. Gonz\'alez, {\em On the stability of the differential process generated by complex interpolation}, J. Inst.  Math.  Jussieu, (2020) 1-32. doi:10.1017/S1474748020000080.



\bibitem{CCFM} J.M.F. Castillo, W. Cuellar, V. Ferenczi, Y. Moreno, {\em Complex structures on twisted Hilbert spaces}, Israel J. Math., 222 (2017), 787--814.

\bibitem{CCFM2} J.M.F. Castillo, W. Cuellar, V. Ferenczi, Y. Moreno, \emph{On disjointly singular centralizers}, Israel J. Math., (to appear) arXiv:1905.08241.
\bibitem{CFG} J.M.F. Castillo, V. Ferenczi, M. Gonz\'alez, \emph{Singular twisted sums generated by complex interpolation}, Trans. Amer. Math. Soc.  369 (2017), 4671--4708.
\bibitem{correa} H.W.G. Corr\^ea, {\em Twisting operator spaces}, Trans. Amer. Math. Soc., 370 (2018),  8921-8957.
\bibitem{correa2} H.W.G. Corr\^ea, \emph{Complex interpolation of families of Orlicz sequence spaces}, Israel J. Math., to appear. arXiv 1906.01013.
\bibitem{correa3} H.W.G. Corr\^ea, \emph{Complex interpolation of Orlicz sequence spaces and its higher order Rochberg spaces}, arXiv 2105.11238.
\bibitem{cowie} E. Cowie, {\em A note on uniquely maximal Banach spaces}, Proc. Amer. Math. Soc.,  26 (1983), 85--87.

\bibitem{CJMR} M. Cwikel, B. Jawerth, M. Milman, and R. Rochberg, {\em Differential estimates and commutators in interpolation theory}, Analysis at Urbana (Earl R. Berkson, N. Tenney Peck, and J. Jerry
Uhl, eds.), vol. 2, Cambridge University Press, 1989, Cambridge Books Online,  170--220.
\bibitem{D} M. Daher, {\em  Hom\'eomorphismes uniformes entre les sph\`eres unit\'e des espaces d'interpolation}, Canad. Math.
Bull. 38 (1995), 286-294.
\bibitem{day} M. Day, {\em Means for the bounded functions and ergodicity of the bounded representations of semi-groups},
Trans. Amer. Math. Soc. 69 (1950), 276--291.

\bibitem{dix} J. Dixmier, {\em Les moyennes invariantes dans les semi-groupes et leurs applications}, Acta Sci. Math. (Szeged) 12 (1950), 213--227.

\bibitem{EM}  L. Ehrenpreis and F. I. Mautner, {\em Uniformly bounded representations of groups}, Proc. Nat. Acad. Sci. U. S. A 41 (1955), 231--233.
\bibitem{FR2} V. Ferenczi and Ch. Rosendal, {\em Non-unitarisable representations and maximal symmetry}, J. Inst. Math. Jussieu 16 (2017) 421--445.
\bibitem{isometries} R. Fleming and J. Jamison, {\em Isometries on Banach spaces}. Vol. 1. Function spaces, Monographs and Surveys in Pure and App. Math., 129. Chapman and Hall/CRC, Boca Raton, FL, 2003.


\bibitem{kalt} N.J. Kalton, \emph{The three-space problem for locally bounded F-spaces}, Compositio Math. 37 (1978) 243--276.

\bibitem{Ksymplectic} N.J. Kalton, {\em The space $Z_2$ viewed as a symplectic Banach space}, Proceedings of Research Workshop on Banach space theory (1981) Bohr-Luh Lin ed., Univ. of Iowa 1982, 97--111.

\bibitem{kaltnachr} N.J. Kalton,
\emph{Locally complemented subspaces and $\mathcal L_p$ spaces for $p<1$}, Math. Nachr. 115 (1984)
71--97.


\bibitem{kaltonams} N.J. Kalton, {\em Nonlinear commutators in interpolation theory}, Mem. Amer. Math. Soc. 385, 1988.

\bibitem{K} N.J. Kalton, \emph{Differentials of complex interpolation processes for K\"othe function spaces,}
Trans. Amer. Math. Soc. 333 (1992) 479--529.

\bibitem{handbook} N.J. Kalton and S. Montgomery-Smith, {\em Interpolation of Banach spaces}, Handbook of the Geometry of Banach spaces, vol. 2, Elsevier 2003, 1131--1175.

\bibitem{KP} N.J. Kalton and N.T. Peck, {\em Twisted sums of sequence spaces and the three-space problem}, Trans. Amer. Math. Soc. 255 (1979) 1-30.


\bibitem{kuch} P. Kuchment, \emph{Thre-representation problem in Banach spaces}, Complex Analysis and Operator Theory (2021)
doi: 10.1007/s11785-021-01079-6.

\bibitem{M} M. Milman, {\em Notes on limits of Sobolev spaces and the continuity of interpolation spaces}, Trans. Amer. Math. Soc., 357 (9) (2005), 3425--3442.


\bibitem{O} N. Ozawa, {\em   An Invitation to the Similarity Problems (after Pisier)}, Surikaisekikenkyusho Kokyuroku, 1486 (2006), 27--40.

\bibitem{pelczynski} A. Pe\l czy\'nski and S. Rolewicz, {\em Best norms with respect to isometry groups in normed linear spaces}, Short communication on International Mathematical Congress in Stockholm (1964), 104.

\bibitem{P}G. Pisier, {\em Similarity problems and completely bounded maps. Second, expanded edition. Includes the solution to ``The Halmos problem''},  Lecture Notes in Mathematics, 1618. Springer-Verlag, Berlin, 2001.

\bibitem{PS}T. Pytlic and R. Szwarc, {\em An analytic family of uniformly bounded representations of free groups},
Acta Math. 157 (1986) 287--309.


\bibitem{rochberg} R. Rochberg, \emph{Higher order estimates in complex interpolation theory}, Pacific J. Math. 174 (1996), no. 1, 247--267.

\bibitem{RW} R. Rochberg, G. Weiss, \emph{Derivatives of Analytic Families of Banach
Spaces}, Annals of Math. 118 (1983) 315--347.

\bibitem{RT} G.O. Thorin. {\em An extension of a convexity theorem due to M. Riesz}. Kungl. Fysiogr. S\"allsk. i LundF\"orh., v.8 (1938), 166--170.

\bibitem{watbled}
F. Watbled, {\em Complex interpolation of a Banach space with its dual}, Math. Scand. 87 (2000) 200--210.
\end{thebibliography}
\end{document}